\documentclass[12pt]{article}
\usepackage{graphicx}
\usepackage{amsmath}
\usepackage{amsfonts}
\usepackage{amsthm}
\usepackage[T1]{fontenc}
\usepackage{url}
\usepackage{color}
\usepackage[margin=1in]{geometry}
\usepackage{amssymb,bm}
\usepackage{amsmath}
\usepackage{amsthm}
\usepackage{graphicx}
\usepackage[active]{srcltx} 
\usepackage{hyperref}
\hypersetup{pdfborder=0 0 0}

\newtheorem{theorem}{{\sc Theorem}}[section]

\newtheorem{lemma}[theorem]{{\sc Lemma}}

\newtheorem{remark}[theorem]{Remark}

\newtheorem{definition}[theorem]{Definition}

\newcommand{\dOm}{\partial\Omega}

\def\XXint#1#2#3{{\setbox0=\hbox{$#1{#2#3}{\int}$ }
\vcenter{\hbox{$#2#3$ }}\kern-.6\wd0}}

\newcommand{\Ga}{\alpha}

\newcommand{\Gd}{\delta}
\newcommand{\Ge}{\epsilon}
\newcommand{\eps}{\varepsilon}
\newcommand{\Gve}{\varepsilon}
\newcommand{\Gf}{\phi}

\newcommand{\Gg}{\gamma}

\newcommand{\Gl}{\lambda}

\newcommand{\Gth}{\theta}

\newcommand{\GG}{\Gamma}

\newcommand{\GL}{\Lambda}

\newcommand{\GO}{\Omega}

\bmdefine\BGa{\alpha}
\bmdefine\BGb{\beta}
\bmdefine\BGd{\delta}
\bmdefine\BGe{\epsilon}
\bmdefine\BGve{\varepsilon}
\bmdefine\BGf{\phi}
\bmdefine\BGvf{\varphi}
\bmdefine\BGg{\gamma}
\bmdefine\BGc{\chi}
\bmdefine\BGi{\iota}
\bmdefine\BGk{\kappa}
\bmdefine\BGl{\lambda}
\bmdefine\BGn{\eta}
\bmdefine\BGm{\mu}
\bmdefine\BGv{\nu}
\bmdefine\BGp{\pi}
\bmdefine\BGth{\theta}
\bmdefine\BGvth{\vartheta}
\bmdefine\BGr{\rho}
\bmdefine\BGvr{\varrho}
\bmdefine\BGs{\sigma}
\bmdefine\BGvs{\varsigma}
\bmdefine\BGt{\tau}
\bmdefine\BGj{\tau}
\bmdefine\BGu{\upsilon}
\bmdefine\BGo{\omega}
\bmdefine\BGx{\xi}
\bmdefine\BGy{\psi}
\bmdefine\BGz{\zeta}
\bmdefine\BGD{\Delta}
\bmdefine\BGF{\Phi}
\bmdefine\BGG{\Gamma}
\bmdefine\BGL{\Lambda}
\bmdefine\BGP{\Pi}
\bmdefine\BGT{\Theta}
\bmdefine\BGS{\Sigma}
\bmdefine\BGU{\Upsilon}
\bmdefine\BGO{\Omega}
\bmdefine\BGX{\Xi}
\bmdefine\BGY{\Psi}

\bmdefine\BCA{{\mathcal A}}
\bmdefine\BCB{{\mathcal B}}
\bmdefine\BCC{{\mathcal C}}
\bmdefine\BCD{{\mathcal D}}
\bmdefine\BCE{{\mathcal E}}
\bmdefine\BCF{{\mathcal F}}
\bmdefine\BCG{{\mathcal G}}
\bmdefine\BCH{{\mathcal H}}
\bmdefine\BCI{{\mathcal I}}
\bmdefine\BCJ{{\mathcal J}}
\bmdefine\BCK{{\mathcal K}}
\bmdefine\BCL{{\mathcal L}}
\bmdefine\BCM{{\mathcal M}}
\bmdefine\BCN{{\mathcal N}}
\bmdefine\BCO{{\mathcal O}}
\bmdefine\BCP{{\mathcal P}}
\bmdefine\BCQ{{\mathcal Q}}
\bmdefine\BCR{{\mathcal R}}
\bmdefine\BCS{{\mathcal S}}
\bmdefine\BCT{{\mathcal T}}
\bmdefine\BCU{{\mathcal U}}
\bmdefine\BCV{{\mathcal V}}
\bmdefine\BCW{{\mathcal W}}
\bmdefine\BCX{{\mathcal X}}
\bmdefine\BCY{{\mathcal Y}}
\bmdefine\BCZ{{\mathcal Z}}

\bmdefine\Bzr{ 0}
\bmdefine\Ba{ a}
\bmdefine\Bb{ b}
\bmdefine\Bc{ c}
\bmdefine\Bd{ d}
\bmdefine\Be{ e}
\bmdefine\Bf{ f}
\bmdefine\Bg{ g}
\bmdefine\Bh{ h}
\bmdefine\Bi{ i}
\bmdefine\Bj{ j}
\bmdefine\Bk{ k}
\bmdefine\Bl{ l}
\bmdefine\Bm{ m}
\bmdefine\Bn{ n}
\bmdefine\Bo{ o}
\bmdefine\Bp{ p}
\bmdefine\Bq{ q}
\bmdefine\Br{ r}
\bmdefine\Bs{ s}
\bmdefine\Bt{ t}
\bmdefine\Bu{ u}
\bmdefine\Bv{ v}
\bmdefine\Bw{ w}
\bmdefine\Bx{ x}
\bmdefine\By{ y}
\bmdefine\Bz{ z}
\bmdefine\BA{ A}
\bmdefine\BB{ B}
\bmdefine\BC{ C}
\bmdefine\BD{ D}
\bmdefine\BE{ E}
\bmdefine\BF{ F}
\bmdefine\BG{ G}
\bmdefine\BH{ H}
\bmdefine\BI{ I}
\bmdefine\BJ{ J}
\bmdefine\BK{ K}
\bmdefine\BL{ L}
\bmdefine\BM{ M}
\bmdefine\BN{ N}
\bmdefine\BO{ O}
\bmdefine\BP{ P}
\bmdefine\BQ{ Q}
\bmdefine\BR{ R}
\bmdefine\BS{ S}
\bmdefine\BT{ T}
\bmdefine\BU{ U}
\bmdefine\BV{ V}
\bmdefine\BW{ W}
\bmdefine\BX{ X}
\bmdefine\BY{ Y}
\bmdefine\BZ{ Z}

\DeclareMathOperator{\sgn}{sgn}
\newcommand{\dx}{\,dx}
\newcommand{\Lp}[1]{L^{p}(#1)}
\newcommand{\norm}[1]{\left\lVert #1 \right\rVert}
\newcommand{\dist}{\operatorname{dist}}
\newcommand{\diam}{\operatorname{diam}}

\newcommand{\SO}{\operatorname{SO}}

\begin{document}

\title{On weighted rigidity estimates for bulk and thin domains}

\author{Davit Harutyunyan\thanks{University of California Santa Barbara, harutyunyan@math.ucsb.edu}
and Andre Martins Rodrigues\thanks{University of California Santa Barbara, andre02@ucsb.edu}}
\maketitle

\begin{abstract}

This work is concerned with weighted Geometric Rigidity Estimates and Korn's first inequalities in bulk and thin domains. 
We consider weights, that are a nonnegative power of the distance function to part of the boundary of the domain. For the case of thin domains, the distance is taken to be from the thin face of the domain boundary, and the constants depend on the domain thickness and when the mid-surface contains a flat region, the constants are proven to have optimal scaling as the thickness approaches zero. We employed some covering techniques utilized by Acosta, Cejas, and Duran in [\ref{bib:Aco.Cej.Dur.}] to prove a weighted Poincar\'e inequality, and later by Conti and Zwicknagl in [\ref{bib:Con.Zwi.}] to prove weighted Poincar\'e and classical Geometric Rigidity inequalities in Lipschitz domains. However, because the distance is taken to be only from part of the boundary, the covering parts become more delicate, especially for thin domains and the analysis becomes non-straightforward. 

\end{abstract}

\section{Introduction}
\label{Sec:1}

This paper explores important mathematical tools used in the theories of linear and nonlinear elasticity. It is within this context that Korn's and Geometric Rigidity type inequalities emerge as pivotal analytical tools. Korn's inequalities, originating from contributions made in by Korn [\ref{bib:Korn.1},\ref{bib:Korn.2}], have played a crucial role in analyzing boundary value problems in the theory of linear elasticity [\ref{bib:Horgan},\ref{bib:Friedrichs},\ref{bib:Kon.Ole.1},\ref{bib:Kon.Ole.2},\ref{bib:Ciarlet},\ref{bib:Aco.Dur.Gar.},\ref{bib:Koh.Vog.},\ref{bib:Mueller}]. They are crucial in establishing the existence of energy minimizers [\ref{bib:Horgan},\ref{bib:Friedrichs},\ref{bib:Kon.Ole.1},\ref{bib:Kon.Ole.2},\ref{bib:Kohn}], and have been central in both linear and nonlinear shell theories. In the book [\ref{bib:Tov.Smi.}], Tovstik and Smirnov discuss how the asymptotics of Korn's constant in Korn's first inequality is linked to the shell deformation in various situations. 

It has been shown by Friesecke, James and M\'ueller in [\ref{bib:Fri.Jam.Mue.1},\ref{bib:Fri.Jam.Mue.2}], that reduced shell theories derived from three dimensional nonlinear elasticity depend on the scaling of the constant in the domain thickness in the Geometric Rigidity Estimate, see also [\ref{bib:Tov.Smi.},\ref{bib:Mueller},\ref{bib:Lew.Mue.},\ref{bib:Harutyunyan.1},\ref{bib:Harutyunyan.2},\ref{bib:Harutyunyan.3},\ref{bib:Gra.Har.1},\ref{bib:Gra.Har.2},\ref{bib:Gra.Tru.},\ref{bib:Yao}]. In their rigorous theory of buckling of slender structures, Grabovsky and Truskinovsky and later Grabovsky and the first author proved, that there is a relation between Korn's constant in Korn's first inequality and the critical buckling load of a given slender structure [\ref{bib:Gra.Tru.},\ref{bib:Gra.Har.1}]. Korn's first inequality in SBD has been proven in [\ref{bib:Cha.Con.Fra.},\ref{bib:Friedrich.1},\ref{bib:Friedrich.2},\ref{bib:Cag.Cha.Sca.}], which proved to be crucial in the theory of fracture  [\ref{bib:Cha.Con.Fra.},\ref{bib:Cha.Con.Iur.},\ref{bib:Con.Foc.Iur.}]. A Korn's inequality with non-constant coefficients have been proven in [\ref{bib:Neff}], for incompatible fields in 
 [\ref{bib:Lew.Mue.Nef.},\ref{bib:Gme.Lew.Nef.}]. 

The focus of this work is weighted Geometric Rigidity and Korn's first inequalities without boundary conditions for bulk and thin domains. These adaptations are important for scenarios necessitating the use of polar coordinates or distinguishing between longitudinal and transverse directions, such as in studies involving junctions of massive bodies and thin rods [\ref{bib:Harutyunyan.1},\ref{bib:Har.Mik.}].
Firstly, they are indispensable when standard Cartesian coordinates are not suitable in the given situation, and a shift to polar coordinates or other variable transformations is required, see [\ref{bib:Harutyunyan.1}]. Additionally, the use of weights, particularly those from part of boundary points, are useful specifically in deriving asymptotically sharp forms rigidity inequalities for junctions of thin plates and the study of junctions between massive bodies and thin rods. 

Most of the results were inspired by the papers Acosta, Cejas, and Duran in [\ref{bib:Aco.Cej.Dur.}] and Conti and Zwicknagl in [\ref{bib:Con.Zwi.}]. In [\ref{bib:Con.Zwi.}], employing an well-established covering technique (e.g. [\ref{bib:Aco.Cej.Dur.}]), Conti and Zwicknagl provide a new proof of the Geometric Rigidity Estimate with explicit constants. The point is, it was generally accepted in the community given the existing proofs and techniques, that the Geometric Rigidity Estimate holds in Lipschitz domains with the constant depending only on the domain Lipschitz character and the domain diameter. A complete proof of this fact was presented in [\ref{bib:Con.Zwi.}] with explicit specifications, which was necessary for the aim of their paper. It is worth mentioning, that the Poincar\'e inequality proven in [\ref{bib:Con.Zwi.}], was already proven in [\ref{bib:Hurri},\ref{bib:Dre.Dur.},\ref{bib:Aco.Cej.Dur.}]. Versions of weighted Korn inequalities for bulk domain have been proven in [\ref{bib:Aco.Dur.Lam.}], and for shells have been proven in [\ref{bib:Harutyunyan.1},\ref{bib:Har.Mik.}]. 

In this paper, we will use similar techniques to prove another version of a weighted Poincaré inequality, that will be utilized to prove weighted versions of the Geometric Rigidity and Korn's first inequalities in bulk domains. Additionally, we prove analogues sharp weighted inequalities for thin domains. As the distance is taken to be only from part of the boundary, the standard Whitney cover does not work anymore and the Whitney-like covering parts of the proofs become somewhat more delicate, especially for thin domains, where one has to construct an $h-$scale uniform cover. Each of the rigidity estimate requires its own nontrivial cover, which we will construct in a separate lemma within the proof of the theorem. We have decided to provide detailed proofs with explicit constants like in [\ref{bib:Con.Zwi.}] in most cases where things are not obvious, for maximal rigor. 

Most of the results in the present paper were part of the PhD thesis of A. M. Rodrigues, and were reported in his dissertation [\ref{bib:Rodrigues}] in October 2023 (published online in March, 2024).  The theorems for thin domains were reported for plates only in [\ref{bib:Rodrigues}], and the paper extends them for $C^2$ thin domains. Some of the theorems are new.


\section{Definitions and main results}
\setcounter{equation}{0}
\label{sec:2}

For the weighted Poincaré and Korn inequalities to hold, one needs to assume some conditions on the domain. In this paper we will assume that the domain is open, connected, and uniformly Lipschitz.  More, precisely, we will adopt the framework in [\ref{bib:Con.Zwi.}] in terms of the definition of uniformly Lipschitz domains to provide constants with explicit dependence on the domain parameters. 

\begin{definition}[Uniformly Lipschitz domains] 
\label{Def:2.1} Let $L, R>0$. An open set $\Omega \subseteq \mathbb{R}^n$ is $(L, R)$-Lipschitz if there is $\Gve>0$ such that:
\begin{enumerate}
    \item $\mathrm{diam}(\Omega)<R\varepsilon.$ 
    \item For every $x \in \partial \Omega$ there are $f_x \in \operatorname{Lip}\left(\mathbb{R}^{n-1} ; \mathbb{R}\right)$ with $\operatorname{Lip}\left(f_x\right) \leq L$ and an isometry $\BA_x: \mathbb{R}^n \rightarrow \mathbb{R}^n$ such that $B_\varepsilon(x) \cap \Omega=B_\varepsilon(x) \cap V_x$, where
        $$
            V_x:=\BA_x\left\{\left(y^{\prime}, y_n\right) \in \mathbb{R}^{n-1} \times \mathbb{R}: y_n<f_x\left(y^{\prime}\right)\right\}
        $$
\end{enumerate}
\end{definition}

So, for uniform Lipschitz domains, the size of the balls covering the boundary is comparable with the diameter of the domain. We will show that weighted Korn's first and Geometric Rigidity Estimates hold in such domains that are also connected. The sets of proper rotations and skew-symmetric matrices on $\mathbb R^n$ are denoted by $\operatorname{SO}(n)$ and 
$\mathbb{R}_{\mathrm{skew}}^{n \times n}$ respectively. Also, we will use boldface capital letters for matrices.


\begin{theorem}[Weighted rigidity estimates on bulk domains]
\label{Thm:2.2} 

Let $\Omega \subset \mathbb{R}^n$ be an open, connected $(L, R)$-Lipschitz set, let $\GG$ be a nonempty closed subset of 
$\partial\Omega,$ and denote $\delta_\GG(x)=\mathrm{dist}(x, \GG)$. Assume $p \in(1, \infty)$ and $\alpha\geq 0.$ Then there exists a constant $C=C(n,p,\alpha,L,R)$, such that for any $u \in W_{\mathrm{loc}}^{1, p}\left(\Omega ; \mathbb{R}^n\right)$, there is a skew-symmetric matrix $\BA \in \mathbb{R}_{\mathrm{skew}}^{n \times n},$ and a proper rotation $\BR \in \operatorname{SO}(n),$ such that
\begin{equation}
\label{2.1}
\|(\delta_\GG)^\alpha(\nabla u-\BA)\|_{L^p(\Omega)} \leq C \left\|(\delta_\GG)^\alpha e(u)\right\|_{L^p(\Omega)}.
\end{equation}
and
\begin{equation}
\label{2.2}
\|(\delta_\GG)^\alpha(\nabla u-\BR)\|_{L^p(\Omega)} \leq C\|(\delta_\GG)^\alpha\operatorname{dist}(\nabla u, \operatorname{SO}(n))\|_{L^p(\Omega)}.
\end{equation}
\end{theorem}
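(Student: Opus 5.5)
We follow the covering strategy of Acosta--Cejas--Dur\'an and Conti--Zwicknagl. It rests on one local estimate and one gluing mechanism. The local estimate is the classical unweighted Korn/rigidity inequality on cubes or balls, with scale-invariant constants. The gluing is a tree-type decomposition of $\Omega$ into pieces on which $\delta_\GG$ is essentially constant. We treat (\ref{2.1}) and (\ref{2.2}) in parallel; the only difference is the local inequality used (Korn's first inequality, or Friesecke--James--M\"uller rigidity) and the fact that for (\ref{2.2}) the local matrices must be rotations. By density and monotone convergence we may assume the right-hand side is finite and $u\in W^{1,p}(\Omega_k)$ on exhausting subdomains.

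\textbf{Step 1: the cover.} Instead of the usual Whitney cover relative to $\dOm$, we build a cover $\{Q_j\}$ of $\Omega$ adapted to $\GG$. Each $Q_j$ is either a cube with $\diam Q_j\sim \delta_\GG$ on $Q_j$, or a uniformly Lipschitz boundary patch $B_{c\Ge}(x)\cap\Omega$ (with $x\in\dOm$) lying at distance $\gtrsim \diam$ from $\GG$. Whitney cubes of $\Omega$ that meet $\dOm\setminus\GG$ far from $\GG$ are absorbed into such patches. Near $\GG$ we use the Lipschitz charts $V_x$ to produce a Whitney decomposition relative to $\GG$ inside each chart: in the chart coordinates, take dyadic "boxes" that are Lipschitz graphs over cubes in $\mathbb{R}^{n-1}$, sized by the distance to $\GG$. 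These boxes are bi-Lipschitz images of cubes with constants depending on $L$, so the local inequalities hold on them uniformly in scale.

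We then arrange the pieces in a tree rooted at a central piece $Q_0$ with $\diam Q_0\sim\Ge$. Consecutive pieces must overlap in a set of comparable measure, and the chain from any $Q_j$ to $Q_0$ must have geometrically growing sizes (a "shadow" condition). Only boundedly many pieces may have a given piece in their chain at a given level. We expect this step to be the main obstacle. Near $\dOm\setminus\GG$ the chains cannot approach the boundary in the Whitney fashion, and points close to both $\GG$ and $\dOm\setminus\GG$ need boxes that touch $\dOm$. We would first try to handle this by working in each chart and taking boxes of the form $\{(y',y_n): y'\in q,\ f_x(y')-\ell(q)\cdot K< y_n<f_x(y')\}$, with $q$ dyadic of side $\ell(q)\sim \dist(\text{box},\GG)$. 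This keeps the finite-overlap and comparable-neighbour properties with constants depending only on $n$ and $L$, while the number of charts is controlled by $n$ and $R$.

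\textbf{Step 2: local estimates and comparison of matrices.} On each $Q_j$ we choose $\BA_j$ (respectively $\BR_j\in\SO(n)$) with
\[
\|\nabla u-\BA_j\|_{L^p(Q_j)}\le C\|e(u)\|_{L^p(Q_j)} .
\]
For neighbours $Q_j$ and $Q_{j'}$, the comparable overlap gives
\[
|\BA_j-\BA_{j'}|\,|Q_j|^{1/p}\le C\bigl(\|e(u)\|_{L^p(Q_j)}+\|e(u)\|_{L^p(Q_{j'})}\bigr),
\]
and the same holds for rotations with $\dist(\nabla u,\SO(n))$. We set $\BA=\BA_0$. Since $\delta_\GG\sim\diam Q_j$ on interior pieces, and $\delta_\GG\lesssim \Ge$ is bounded above and below by comparable values on the far boundary patches, we obtain
\[
\|\delta_\GG^\alpha(\nabla u-\BA)\|_{L^p(\Omega)}^p\lesssim \sum_j \|\delta_\GG^\alpha(\nabla u-\BA_j)\|^p_{L^p(Q_j)}+\sum_j \ell_j^{\alpha p}|Q_j|\,|\BA_j-\BA_0|^p .
\]
On the boxes touching $\GG$, the weight varies within a box. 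Because $\alpha\ge 0$, $\delta_\GG^\alpha\le C\ell_j^\alpha$ there, which is the direction we need for the first sum. The first sum is therefore bounded by the right-hand side once each box's weight is comparable from below as well. This holds if the boxes are chosen at distance $\gtrsim$ their size from $\GG$, except for the null set $\GG$ itself.

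\textbf{Step 3: weighted Hardy-type summation along the tree.} The term $|\BA_j-\BA_0|$ is bounded by a telescoping sum over the chain from $Q_j$ to $Q_0$. This turns the second sum into a discrete Hardy operator
\[
\sum_j \ell_j^{\alpha p}|Q_j|\Bigl(\sum_{Q_k\supseteq\text{chain of }Q_j} |Q_k|^{-1/p}\,a_k\Bigr)^p,\qquad a_k=\|e(u)\|_{L^p(Q_k\cup Q_{k'})}.
\]
We bound it by $C\sum_k \ell_k^{\alpha p}a_k^p$, exactly as in the weighted Poincar\'e argument of Acosta--Cejas--Dur\'an. The tools are H\"older with geometric weights $\ell_k^{\theta}$, the geometric growth along chains, and the fact that the pieces in the shadow of $Q_k$ have total measure $\lesssim|Q_k|$ and sizes $\le \ell_k$. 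Since $\alpha\ge0$, the factor $(\ell_j/\ell_k)^{\alpha p}\le1$ only helps. Finally, $\ell_k^{\alpha p}a_k^p\lesssim\|\delta_\GG^\alpha e(u)\|^p_{L^p(Q_k\cup Q_{k'})}$, and finite overlap gives (\ref{2.1}). The same computation with rotations gives (\ref{2.2}); the matrix $\BR=\BR_0$ is automatically in $\SO(n)$. All constants depend only on $n,p,\alpha,L,R$.
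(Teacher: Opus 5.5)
Your proposal is sound in outline, but it globalizes differently from the paper. The local layer is essentially the same. The paper's Lemma 5.1 builds the $\Gamma$-adapted Whitney decomposition you describe: Whitney cubes of $\mathbb R^n\setminus\Gamma$, where cubes whose doubles meet $\partial\Omega$ are replaced by collars $\{|y'|<3\sqrt n\,r_i,\ f(y')-3\sqrt n(1+L)r_i<y_n<f(y')\}$. This gives $\delta_\Gamma\simeq r_i$ on every piece and bounded overlap, and the unweighted rigidity estimate is applied on each piece.

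The global step, however, uses no tree, chains or discrete Hardy inequality. The paper sets $\BF=\sum_i\varphi_i\BR_i$ for a partition of unity with $|\nabla\varphi_i|\lesssim 1/r_i$. It gets $\|\delta_\Gamma^\alpha(\nabla u-\BF)\|_{L^p}\le CE$, and from $\sum_i\nabla\varphi_i=0$ also $\|\delta_\Gamma^{1+\alpha}\nabla\BF\|_{L^p}\le CE$. It then applies the weighted Poincar\'e inequality of Theorem 2.4(i) to $\BF$ and finally projects the resulting constant matrix onto $\SO(n)$; (2.1) is deduced from (2.2) by linearization.

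The paper's route therefore needs only local properties of the cover, and delegates all global connectivity to Theorem 2.4(i), which rests on a finite $\varepsilon$-scale cover and the one-dimensional Hardy inequality of Lemma 3.1. Your route is self-contained, treats Korn and rigidity symmetrically (no linearization), and gives $\BR=\BR_0\in\SO(n)$ directly. The price is constructing chains relative to $\Gamma$ with geometric growth and the shadow property. That construction is available: Lipschitz domains are John domains, $\delta_\Gamma\ge\delta_{\partial\Omega}$, and $\delta_\Gamma$ is $1$-Lipschitz. Hence a John curve from $x$ passes, at arclength $t$, through points with $\delta_\Gamma\simeq\max(\delta_\Gamma(x),t)$.

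Two minor corrections.

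\textbf{Local estimates on the pieces.} Unlike Poincar\'e, Korn's and the rigidity inequality are not transported by bi-Lipschitz changes of variables. What you need, and what the paper uses in (P2), is that your boxes are uniformly Lipschitz at every scale. For the same reason, your far patches should be collars rather than $B_{c\varepsilon}(x)\cap\Omega$, which need not be uniformly Lipschitz.

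\textbf{Step 3 at $\alpha=0$.} H\"older with weights $\ell_k^{-\theta}$ closes directly for $\alpha>0$ by taking $0<\theta\le\alpha$. At $\alpha=0$, however, the sum of $\ell_j^{\,n-\theta p}$ over the shadow of $Q_k$ is not controlled by the shadow-measure bound at $Q_k$ alone. You must either iterate that bound over all nodes, which makes the total measure of generation-$\ge m$ descendants of $Q_k$ decay like $(1-1/K)^m|Q_k|$, or simply quote the classical unweighted estimates.
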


\vspace{0.4cm}

An analogous result with sharp constants holds for thin domains as well. Before we formulate the theorem, let us recall what a thin domain is. Adopting the terminology in [\ref{bib:Lew.Mue.}] (see also [\ref{bib:Harutyunyan.3}]), assume $S\subset\mathbb R^3$ is a connected compact regular $C^1$ globally bi-Lipschitz surface with a unit normal field $n(x)\colon S\to \mathbb S^{2}. $ Let $h>0$ be a small parameter and assume the family of Lipschitz functions $g_1^h(x),g_2^h(x)\colon S\to (0,\infty)$ satisfies the uniform conditions
\begin{equation}
\label{2.3}
h\leq g_1^h(x),g_2^h(x)\leq c_1 h\quad \text{and}\quad |\nabla g_1^h(x)|+|\nabla g_2^h(x)|\leq c_2h
\quad\text{for all}\quad x\in S.
\end{equation}
Then the set 
\begin{equation}
\label{2.3'}
\Omega_h=\{x+tn(x) \ : \ x\in S,\ t\in [-g_1^h(x),g_2^h(x)]\}
\end{equation}
is a shell with thickness of order $h,$ which is also called a thin domain of thickness of order $h.$ The surface $S$ is called the mid-surface of the thin domain $\Omega_h.$

The next theorem is the analogue of Theorem~\ref{Thm:2.2} for thin domains, with the distance taken to be only from the thin face of the domain boundary. Due to the specifics of that distance, one needs to assume at least some kind of Lipschitz regularity on the boundary of the mid-surface $S.$ Below is hypothesis (H) that will be assumed in Theorems~\ref{Thm:2.3} and \ref{Thm:2.4}. concerning thin domains.\\ 
\noindent\textbf{(Hypothesis H).}  
\begin{itemize}
\item[(H1)] Assume $S\subset\mathbb R^3$ is a connected compact regular $C^2$ globally bi-Lipschitz surface with a unit normal field $n(x)\in C^1(S;\mathbb S^{2})$ and with a nonempty boundary $\partial S.$

\item[(H2)] There exist a radius $r_S>0$ and a constant $L_S>0,$ such that for every $x\in S$, in the frame $(T_xS,n(x))$ centered at $x,$ one has $S\cap B(x,r_S)$ is the graph of a $C^{2}$ function $\Gf_x$ over an open subset
$0\in \mathcal O_x\subset T_xS$ with $\Gf_x(0)=0$, $\nabla \Gf_x(0)=0$ and $|\nabla \Gf_x|\le\tfrac12$. Here, $T_xS$ is the tangent space to $S$ at $x.$\footnote{ Note, that this condition automatically follows from the compactness and $C^2$-regularity of $S$ i.e., (H1), but we formulate is within the hypothesis for convenience in applications, as (H3) contains the same constants and is not a direct consequnce of the compactness of and $C^2$-regularity of $S.$}
\item[(H3)] If $\mathrm{dist}(x,\partial S)\ge r_S$ then $B'_{r_S/2}\subset \mathcal O_x$ in the frame $(T_xS,n(x))$. If $x\in\partial S$ then there exists a Lipschitz function $\psi_x\colon\mathbb R\to\mathbb R$ such that
$$
\mathcal O_x\cap B'_{r_S}=\{y=(y_1,y_2)\in B'_{r_S}:\ y_2<\psi_x(y_1)\},\qquad
\mathrm{Lip}(\psi_x)\le L_S,\ \psi_x(0)=0,
$$
and $\partial S\cap B(x,r_S)$ corresponds to the graph of $\psi_x$ within $B(x,r_S).$
\end{itemize}
In what follows, hypothesis (H) means all (H1)-(H3) together.


\begin{theorem}[Sharp weighted rigidity estimates for thin domains]
\label{Thm:2.3} 
Let $S\subset\mathbb R^3$ satisfy hypothesis (H) and let $g_1^h(x),g_2^h(x)\colon S\to (0,\infty)$ and $\Omega_h$ be as in (\ref{2.3}) and (\ref{2.3'}). Denote the lateral boundary of $\Omega_h$ by $\partial_S\Omega_h=\{x+tn(x) \ : \  x\in\partial S, \ \ t\in  [-g_1^h(x),g_2^h(x)]\},$ and denote the distance from the lateral boundary $\delta'(x)=\mathrm{dist}(x,\partial_S\Omega_h)$ for $x\in\mathbb R^3.$ Assume further $p \in(1, \infty)$ and $\alpha\geq 0$. There exist constants $h_0=h_0(S,c_1)>0$ and 
$C=C(S,c_1,c_2,p,\alpha)>0,$ such that for any vector field $u \in W^{1, p}\left(\Omega_h ; \mathbb{R}^3\right),$ there exists a proper rotation $\BR \in \operatorname{SO}(3)$ and a skew-symmetric matrix $\BA \in \mathbb{R}_{\mathrm{skew}}^{3 \times 3},$ such that for any $h\in (0,h_0)$ one has
 \begin{equation}
\label{2.4}
\|(\delta')^\alpha(\nabla u-\BA)\|_{L^p(\Omega_h)} \leq  \frac{C}{h}\left\|(\delta')^\alpha e(u)\right\|_{L^p(\Omega_h)},
\end{equation}
and
\begin{equation}
\label{2.5}
\|(\delta')^\alpha(\nabla u-\BR)\|_{L^p(\Omega_h)} \leq \frac{C}{h}\|(\delta')^\alpha\operatorname{dist}(\nabla u, \operatorname{SO}(3))\|_{L^p(\Omega_h)}.
\end{equation}
Moreover, if the mid-surface $S$ has a flat region, i.e., a relatively open subset of $S$ is contained in a two dimensional hyperplane, then the asymptotics of the constants in both inequalities is sharp. 
\end{theorem}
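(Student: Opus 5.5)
We will deduce Theorem~\ref{Thm:2.3} from the unweighted sharp thin-domain estimates together with a weighted Poincar\'e-type covering argument in the spirit of [\ref{bib:Aco.Cej.Dur.}], [\ref{bib:Con.Zwi.}]. The starting point is the classical fact (Friesecke--James--M\"uller for the rigidity estimate, and its linear analogue for Korn's first inequality without boundary conditions) that on a thin domain $\Omega_h$ as in (\ref{2.3'}) satisfying (H) one has, for $h<h_0$,
$$\|\nabla u-\BR\|_{L^p(\Omega_h)}\le \frac{C}{h}\|\operatorname{dist}(\nabla u,\SO(3))\|_{L^p(\Omega_h)},$$
and analogously for $e(u)$ with a skew matrix $\BA$. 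On the other hand, on any piece of $\Omega_h$ of diameter comparable to $h$ (an ``$h$-cell'' $\{x+tn(x): x\in S\cap B(x_0,h),\ t\in[-g_1^h,g_2^h]\}$) the domain is uniformly bi-Lipschitz to a cube of side $h$. Hence the bulk local rigidity holds there with an $h$-independent constant, by scaling.

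The weight $\delta'$ only degenerates near $\partial_S\Omega_h$. We fix a scale and split $\Omega_h$ into two parts. The interior part $\Omega_h^{\rm int}=\{\delta'\ge \kappa h\}$ carries a weight comparable, on each $h$-cell, to a constant. The boundary layer is $\{\delta'<\kappa h\}$ within distance $r_S$ of $\partial S$.

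The key construction is an $h$-scale Whitney-like cover of $\Omega_h$ by cells $Q_j$ with the following properties:
\begin{enumerate}
\item on each $Q_j$, $\sup_{Q_j}\delta'\le C\inf_{Q_j}\delta'$, or $Q_j$ touches $\partial_S\Omega_h$;
\item the cover has bounded overlap;
\item neighboring cells intersect in sets of measure comparable to $|Q_j|$.
\end{enumerate}
Away from $\partial S$ the cells are just $h$-cells, since for $\delta'\ge h$ the weight varies by a bounded factor across a cell of size $h$. Near $\partial S$ we use the local graph description (H3) via $\psi_x$ and the graph $\Gf_x$, flatten the lateral boundary to a half-space times an interval of length $\sim h$, and take dyadic layers $\{2^{-k-1}h\le \delta'\le 2^{-k}h\}$ subdivided into cubes of side $2^{-k}h$ (in tangential and normal directions). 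Each chain of such cubes leads to a ``base'' $h$-cell. The bi-Lipschitz constants are controlled by $L_S$, $c_1$, $c_2$ and the $C^2$ norm of $S$.

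With the cover in hand, we argue as in [\ref{bib:Con.Zwi.}]. Let $\BR$ be the rotation given by the unweighted rigidity on $\Omega_h^{\rm int}$; more precisely, it is obtained by applying the unweighted thin estimate to $\Omega_h$ with the weight frozen at bounded scale, using that $\delta'\sim$ const on cells of the base layer. Then:
\begin{enumerate}
\item On each small cell $Q$ we obtain $\BR_Q$ with $\|\nabla u-\BR_Q\|_{L^p(Q)}\le C\|\operatorname{dist}(\nabla u,\SO(3))\|_{L^p(Q)}$ by scale invariance. In the Korn case this becomes $\BA_Q$ with a linear estimate.
\item We compare $\BR_Q$ with $\BR$ along the chain to the base cell, summing differences $|\BR_{Q}-\BR_{Q'}|\le C|Q|^{-1/p}(\ldots)$.
\item We use a Hardy-type (discrete weighted) inequality in the dyadic index $k$ to absorb the chain sums. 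This is where $\alpha\ge0$ matters: the weights $(2^{-k}h)^{\alpha p}$ decay along the chain, giving a convergent geometric series. Alternatively, for $\alpha>0$ a direct Hardy inequality applies, and $\alpha=0$ is trivial.
\end{enumerate}
On the interior part, the weight is comparable to a constant on each $h$-cell but varies over the $O(1)$ tangential scale. So we apply the unweighted sharp estimate not globally but through a covering of $S$ by $O(1)$ patches of size comparable to $r_S$, with controlled weight ratio on each patch or handled by the same dyadic argument in the tangential direction. The $1/h$ factor arises only from the unweighted thin estimate on patches; chain comparisons cost $O(1)$. The Korn inequality (\ref{2.4}) is handled identically with skew matrices and linearity replacing the nonlinear rigidity.

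The main obstacle is the cover near the corners where $\partial S$ meets the thin faces: cells must simultaneously respect thickness $h$ and the distance to the lateral face only, so one cannot use a standard Whitney cover of $\Omega_h$.

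Sharpness for a flat region: take $S$ containing a flat disc, and test with a bending field supported in a compactly contained sub-disc away from $\partial S$, so that $\delta'\sim1$ there. Concretely, take $u=(-x_3\partial_1\varphi,-x_3\partial_2\varphi,\varphi(x_1,x_2))$, for which $e(u)=O(h)|\nabla u|$ on the support. The rotation case is handled by $\mathrm{id}+\epsilon u$ linearized. This gives the ratio $\sim 1/h$.
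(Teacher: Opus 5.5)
\textbf{There is a genuine gap in your proposal: the region $\kappa h\lesssim\delta'\lesssim 1$.} This is where the difficulty of the weighted estimate lies, and the mechanisms you give for it do not work as written.

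\emph{The frozen-weight step.} Taking $\BR$ from the unweighted thin estimate on $\Omega_h^{\rm int}=\{\delta'\ge\kappa h\}$ and then inserting the weight costs the ratio $\sup(\delta')^\alpha/\inf(\delta')^\alpha\sim(\kappa h)^{-\alpha}$. This yields only the constant $Ch^{-1-\alpha}$. ``Freezing the weight'' is not legitimate, because $\delta'$ ranges over $[\kappa h,C]$ on that set.

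\emph{The $O(1)$ patches.} The same objection applies to the patches of size $\sim r_S$: those adjacent to $\partial S$ have weight ratio $\sim r_S/(\kappa h)$.

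\emph{The tangential dyadic alternative.} What remains is your one-clause alternative, a Whitney decomposition in the tangential direction. It is not ``the same dyadic argument'' as in your boundary layer. There, the cells are bulk-like and have scale-invariant $O(1)$ rigidity constants. Here, a cell $P$ of tangential size $\ell_P\in[h,1]$ and thickness $h$ carries only the thin constant $C\ell_P/h$, and chains have length up to $\log_2(1/h)$. Your claim that chain comparisons cost $O(1)$ is then the whole content, and it must be verified.

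It does hold. Write $E_P=\|(\delta')^\alpha\dist(\nabla u,\SO(3))\|_{L^p(P)}$ and compare neighbours on overlaps of volume $\sim\ell^2h$, where $\delta'\sim\ell$. The link between the ancestors $P_k,P_{k+1}$ of $P$ at scales $\sim2^k\ell_P$ contributes to $\|(\delta')^\alpha(\BR_P-\BR_{\rm root})\|_{L^p(P)}$ at most
$C(\ell_P/\ell_{P_k})^{\alpha+2/p}\frac{\ell_{P_k}}{h}(E_{P_k}+E_{P_{k+1}})\le\frac Ch2^{-k(\alpha+2/p)}(E_{P_k}+E_{P_{k+1}})$.
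A cell has $\sim2^k$ descendants $k$ generations down, since $\partial S$ is one-dimensional. Because $\alpha p+2>1$, the resulting series converges after H\"older. Without this computation, or an equivalent discrete Hardy inequality, you cannot exclude a $\log(1/h)$ or $h^{-\alpha}$ loss. As it stands, the proposal does not prove (\ref{2.5}).

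\textbf{How the paper avoids multiscale chaining.} It covers $\Omega_h$ only at the uniform scale $h$ (Lemma~\ref{Lem:6.1}).
\begin{itemize}
\item On interior $h$-patches, $\delta'$ is comparable to a constant, and the standard rigidity estimate applies.
\item On $h$-patches touching the lateral face, it applies the bulk weighted estimate of Theorem~\ref{Thm:2.2} with the boundary piece $\Gamma_i$. Your sub-$h$ Whitney cubes and the corner issue are hidden inside Lemma~\ref{Lem:5.1} there.
\end{itemize}
The local rotations are glued into $\BF=\sum_i\varphi_i\BR_i$. Since $|\nabla\varphi_i|\le M_0/h$ and $\delta'\le C$, one gets $\|(\delta')^{1+\alpha}\nabla\BF\|_{L^p(\Omega_h)}\le\frac ChE$. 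All intermediate scales of the weight are then handled at once by the weighted thin Poincar\'e inequality, Theorem~\ref{Thm:2.4}(ii), whose constant is independent of $h$. The factor $1/h$ enters only through $\nabla\varphi_i$.

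Your sharpness argument (a Kirchhoff--Love field supported in the flat region away from $\partial S$, plus linearization) agrees with the paper's.
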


Like in [\ref{bib:Con.Zwi.}], Theorems~\ref{Thm:2.2} and \ref{Thm:2.3} will be proven by means of a weighted Poincaré inequality, which we formulate below. We will need to prove the inequality both for bulk and thin domains for applications. While the weighted version for bulk domains is a straightforward extension of the already standard weighted Poincae\'e inequality, the one for thin domains is new.


\begin{theorem}[Weighted Poincaré Inequality] 
\label{Thm:2.4} 

\begin{itemize}
\item[(i)] Let $\Omega \subset \mathbb{R}^n$ be open, connected $(L, R)$-Lipschitz set, and let $\GG$ be a nonempty closed subset of $\dOm$ and let $\delta_\GG(x)=\mathrm{dist}(x, \GG)$. Assume $p \in[1, \infty)$ 
and $\alpha\geq 0.$ There exists a constant $C=C(n,p,\alpha,L,R)$, such that for any $u \in W_{\mathrm{loc}}^{1, p}\left(\Omega ; \mathbb{R}^k\right)$, there exists $a \in \mathbb{R}^k$ such that
\begin{equation}
\label{2.6}
\|(\delta_\GG)^\alpha (u-a)\|_{L^p(\Omega)} \leq C \|(\Gd_\GG)^{1+\Ga} \nabla u\|_{L^p(\Omega)} .
\end{equation}

\item[(ii)] Let $S, \Omega_h,\delta',p$ and $\alpha$ be as in Theorem~\ref{Thm:2.3}. There exists constants\\ 
$C=C(S,c_1,c_2,p,\alpha)>0$ and $h_0=h_0(S,c_1)>0$ such that for any $h\in (0,h_0)$ and any 
$u \in W^{1, p}\left(\Omega_h ; \mathbb{R}^3\right)$, there exists $a \in \mathbb{R}^3$ such that
\begin{equation}
\label{2.7}
\|(\delta')^\alpha (u-a)\|_{L^p(\Omega_h)} \leq C \|(\Gd')^{1+\Ga} \nabla u\|_{L^p(\Omega_h)} .
\end{equation}

\end{itemize}
\end{theorem}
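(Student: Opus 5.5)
Both parts will be proved by a Whitney-type covering argument, following the scheme of Acosta--Cejas--Dur\'an [\ref{bib:Aco.Cej.Dur.}] and Conti--Zwicknagl [\ref{bib:Con.Zwi.}]. Poincar\'e's inequality is applied on each cube of a suitable cover, and the local averages are then chained together.

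For part (i), we first reduce to a model case. By Definition~\ref{Def:2.1}, $\Omega$ is covered by a bounded number (depending on $n,L,R$) of sets of the form $B_\Gve(x)\cap\Omega$, each Lipschitz-graph-like, together with an interior region at distance $\gtrsim\Gve$ from $\dOm$. All scales are normalized by $\Gve$, so only ratios matter. Connectedness, together with a chain of overlapping pieces of uniformly bounded length, lets us glue local constants: two averages on overlapping pieces differ by at most the sum of the local errors.

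On each piece we build a cover adapted to $\delta_\GG$ rather than to $\operatorname{dist}(\cdot,\dOm)$. We take a Whitney decomposition of the open set $\mathbb R^n\setminus\GG$ into dyadic cubes $Q$ with $\operatorname{diam}Q\sim\operatorname{dist}(Q,\GG)$. We intersect these cubes with $\Omega$ and, using the Lipschitz graph structure, enlarge each intersection to a uniformly Lipschitz (star-shaped with respect to a ball) subset $U_Q\subset\Omega$ of comparable size. On $U_Q$, $\delta_\GG$ is comparable to $\ell(Q)$, so the unweighted Poincar\'e inequality gives
\[\|\delta_\GG^\alpha(u-u_{U_Q})\|_{L^p(U_Q)}\lesssim \|\delta_\GG^{1+\alpha}\nabla u\|_{L^p(U_Q)}.\]
Each $U_Q$ is joined to a fixed central set $U_0$ by a chain of sets whose sizes grow geometrically toward $U_0$; this is the Boman chain or tree condition of [\ref{bib:Aco.Cej.Dur.}]. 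The differences of averages along the chain are controlled by the local terms. The resulting tree sum is bounded by a discrete Hardy-type inequality, weighted by $\ell(Q)^{\alpha p}$. Because $\ell(Q)^{\alpha p}|Q|$ decays geometrically along the tree for $\alpha\ge0$, this is exactly as in [\ref{bib:Aco.Cej.Dur.}]. We then set $a=u_{U_0}$.

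The main obstacle in (i) is constructing this tree. Cubes near $\dOm\setminus\GG$ are not small, yet they touch the boundary. The enlargement $U_Q$ must therefore be done through the Lipschitz charts, with constants depending only on $L$ and the ratio of $\ell(Q)$ to $\Gve$. The tree must also stay inside $\Omega$ while moving away from $\GG$. We plan to move first in the chart's vertical direction, into the interior, and then laterally.

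For part (ii) we use the same scheme, but the cover is built at scale $h$. Via hypothesis (H) we map $\Omega_h$ locally by the bi-Lipschitz normal coordinates $(y,t)\mapsto y+tn(y)$, whose Jacobian is uniformly comparable to $1$ for $h<h_0$. On $S$ we take a Whitney cover relative to $\partial S$, capping the cell size at $h$. Cells with $\operatorname{dist}(y,\partial S)\gtrsim h$ and size $\ell\ge h$ are split into $O(\ell/h)^2$ cells of size $\sim h$. After taking full thickness, each cell is a set of size $\sim h$, uniformly Lipschitz, on which $\delta'$ is nearly constant. The cells with $\ell<h$ near $\partial_S\Omega_h$ are handled inside boxes of size $h$ by part (i), applied to the Lipschitz box in the $\psi_x$ chart with $\GG$ the lateral face and rescaled by $h$. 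This rescaling leaves the inequality invariant, since both sides scale as $h^{\alpha+n/p}$.

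The chaining across $S$ is the hard part. A chain of $h$-cubes joining a far cube to the center has length $\sim1/h$. A naive triangle inequality loses a factor $1/h$, whereas (\ref{2.7}) has no $h$ loss. The gain comes from the weight: $\delta'^{1+\alpha}$ is of order $1$ on most of the domain, while $\delta'^\alpha$ is no larger. To use this, we group the $h$-cubes into dyadic rings at distance $2^k h$ from $\partial S$. Inside the interior, where $\delta'\sim1$, we instead apply the standard Poincar\'e inequality on the thin domain at unit scale, taking the mid-surface region as a whole. Its constant is $O(1)$ for the plain gradient, since the tangential diameter is $O(1)$ and the thickness direction only helps. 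Near $\partial S$, the dyadic rings are linked in the transverse direction by the Hardy-type summation from (i), now at scales between $h$ and $r_S$. We expect the key technical verification to be that the lateral chaining within a ring of width $2^kh$ costs at most a factor $(2^kh)^{-1}$. This factor is absorbed by $\delta'\sim 2^kh$ in the weight $\delta'^{1+\alpha}$. This is the step we would check first.
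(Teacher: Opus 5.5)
\textbf{Part (i).} Your route is viable, but it is not the paper's.

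The paper uses a \emph{finite} cover at scale $\eps$ (Lemma~\ref{Lem:4.1}): balls centred on $\Gamma$, collars around points of $\partial\Omega$ far from $\Gamma$, and interior balls. The weighted estimate near $\Gamma$ comes from the one-dimensional Hardy inequality of Lemma~\ref{Lem:3.1}, integrated along vertical lines of a Lipschitz chart (Lemma~\ref{Lem:3.2}). The constants are then chained along a spanning tree of bounded depth, so no Hardy inequality on trees is needed.

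That is more elementary, but it measures the weight by the vertical distance $f(x')-x_n$ to the graph. The comparison $\delta_\Gamma\lesssim f(x')-x_n$ invoked in Lemma~\ref{Lem:3.2} holds only where $\Gamma$ fills the chart boundary; it fails near points of $\partial\Omega\setminus\Gamma$.

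Your Whitney cubes of $\mathbb R^n\setminus\Gamma$ (the decomposition the paper itself uses in Lemma~\ref{Lem:5.1}) see $\delta_\Gamma$ directly. The price is a Boman-chain construction and a tree Hardy inequality with weights $\ell(Q)^{\alpha p+n}$. For that inequality you must check two things: the shadow of each $Q$ (the cubes whose chain passes through $Q$) stays within $C\ell(Q)$ of $Q$, and each chain meets boundedly many cubes of each dyadic size.

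\textbf{Part (ii): the gap.} The step you flag fails: the factor $(2^kh)^{-1}$ is not absorbed by the weight. Relative to the left side of (\ref{2.7}), the right side carries exactly one extra factor $\delta'$. That factor pays for a Poincar\'e constant of order $\delta'$, i.e.\ for pieces of lateral size $\sim\delta'$; it cannot pay a second time. A ring at distance $\sim 2^kh$ from $\partial S$ has lateral extent of order one, so one constant $a_k$ per ring gives at best
\[
\|(\delta')^\alpha(u-a_k)\|_{L^p(\mathrm{ring}_k)}\lesssim(2^kh)^{\alpha}\|\nabla u\|_{L^p(\mathrm{ring}_k)}\simeq(2^kh)^{-1}\|(\delta')^{1+\alpha}\nabla u\|_{L^p(\mathrm{ring}_k)} .
\]
For $k=0$ this is exactly the $h^{-1}$ loss that (\ref{2.7}) excludes. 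Chaining $2^kh$-cubes along the ring is no better: each local Poincar\'e step has already spent the extra $\delta'$, and the chain length $(2^kh)^{-1}$ is left over.

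The loss is real. Take $u$ with $|\nabla u|\lesssim 1$ that varies by order one along $\partial S$. The innermost ring contributes $\simeq h^{\alpha}|\mathrm{ring}_0|^{1/p}$ to the left side but only $\lesssim h^{1+\alpha}|\mathrm{ring}_0|^{1/p}$ to the right. Inequality (\ref{2.7}) holds for such $u$ only because each boundary cell can be compared \emph{radially} with cells at depth $\sim r_S$, where lateral variation is cheap.

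The cause is capping the cells at size $h$. Instead, keep the Whitney cells of $S$ relative to $\partial S$ at their natural size $\ell\simeq\operatorname{dist}(\cdot,\partial S)$ (up to a fixed scale) and thicken them to full thickness. Such a slab is not uniformly Lipschitz, but its Poincar\'e constant is still $\lesssim\ell$: stretch the normal direction by $\ell/h$, so the gradient only decreases and the Jacobian cancels. This is your own ``thinness only helps'' observation.

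Then link every cell radially inward, as in your part (i). The tree weights $\ell^{\alpha p}|\mathrm{cell}|\simeq h\,\ell^{\alpha p+2}$ share the factor $h$, so the discrete Hardy constant is independent of $h$. Only cells within $O(h)$ of $\partial S$ need the rescaled version of (i).

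The paper avoids all of this with a global change of variables. The map $\Phi_h(x+sn(x))=x+\frac{sh}{h_0}n(x)$ sends the fixed-thickness, uniformly Lipschitz domain $\widehat\Omega^h_{h_0}$ (Lemma~\ref{Lem:4.2}) onto $\Omega_h$. For $v=u\circ\Phi_h$ one has $|Dv|\le C|\nabla u\circ\Phi_h|$, the Jacobian $\simeq h$ appears on both sides, and $\delta'\circ\Phi_h\simeq\widehat d_{h_0}$. Hence (ii) follows from (i) on $\widehat\Omega^h_{h_0}$ with $\Gamma=\widehat\Gamma_{h_0}$, with no $h$-scale chaining at all.
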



\section{Auxiliary lemmas}
\setcounter{equation}{0}
\label{Sec:3}

The first step is to extend [Lemma 5.6, \ref{bib:Con.Zwi.}], which can be considered a Weighted Poincar\'e inequality in 1D.


\begin{lemma}
\label{Lem:3.1}
Let $I=(a,b)\subset\mathbb{R}$ and $E=[a,a+\epsilon]$ with $0<\epsilon<b-a$. Write
$\beta:=\phi(a)$. Then for all $\phi\in C^{1}(\overline I)$, $\alpha\ge 0$, $\lambda\in\mathbb{R}$
and $p\in[1,\infty)$ one has

\begin{align}
\label{3.1}
\int_{I} \bigl |(b-x)^{\alpha}&(\phi-\lambda)\bigr |^{p}dx\le \\ \nonumber
&2^{p-1}\left(\frac{|I|}{|E|}\right)^{\alpha p+1}\int_{E}\bigl|(b-x)^{\alpha}(\beta-\lambda)\bigr|^{p}dx+
2^{p-1}\left(\frac{p}{\alpha p+1}\right)^{\!p}\!\!\int_{I}\bigl|(b-x)^{1+\alpha}\phi'\bigr|^{p}dx .
\label{3.1}
\end{align}

\end{lemma}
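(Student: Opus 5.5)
The plan is to split $\phi-\lambda=(\beta-\lambda)+(\phi-\beta)$ and bound the two pieces separately. The convexity inequality $|x+y|^p\le 2^{p-1}(|x|^p+|y|^p)$, valid for $p\ge1$, gives
$$\int_I|(b-x)^\alpha(\phi-\lambda)|^p\,dx\le 2^{p-1}|\beta-\lambda|^p\int_I(b-x)^{\alpha p}\,dx+2^{p-1}\int_I(b-x)^{\alpha p}|\phi(x)-\phi(a)|^p\,dx .$$
This already produces the factors $2^{p-1}$ in (\ref{3.1}). It then suffices to show two separate bounds:
$$\int_I(b-x)^{\alpha p}\,dx\le\left(\frac{|I|}{|E|}\right)^{\alpha p+1}\int_E(b-x)^{\alpha p}\,dx$$
and a one-sided Hardy inequality for the second integral.

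\textbf{Constant term.} Put $q=\alpha p+1\ge1$, $A=|I|=b-a$ and $\epsilon=|E|$. Computing directly,
$$\int_I(b-x)^{\alpha p}\,dx=\frac{A^q}{q},\qquad \int_E(b-x)^{\alpha p}\,dx=\frac{A^q-(A-\epsilon)^q}{q}.$$
The needed inequality is therefore $\epsilon^q\le A^q-(A-\epsilon)^q$, that is, $(A-\epsilon)^q+\epsilon^q\le A^q$. This holds because $t\mapsto t^q$ is convex on $[0,\infty)$ with value $0$ at $0$, hence superadditive.

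\textbf{Hardy step.} Write $\phi(x)-\phi(a)=\int_a^x\phi'(s)\,ds$ and substitute $t=b-x$, $g(\tau)=|\phi'(b-\tau)|$, with $g$ extended by $0$ for $\tau>A$. The claim becomes
$$\Big\|t^\alpha\int_t^\infty g(\tau)\,d\tau\Big\|_{L^p(0,A)}\le\frac{p}{\alpha p+1}\,\|\tau^{1+\alpha}g\|_{L^p(0,A)} .$$
To prove it, rescale $\tau=t\sigma$, so that $t^\alpha\int_t^\infty g=\int_1^\infty t^{\alpha+1}g(t\sigma)\,d\sigma$. Minkowski's integral inequality (valid for all $p\ge1$, including $p=1$) then bounds the left-hand norm by $\int_1^\infty\|t^{\alpha+1}g(t\sigma)\|_{L^p(dt)}\,d\sigma$. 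The change of variables $s=t\sigma$ gives
$$\|t^{\alpha+1}g(t\sigma)\|_{L^p(dt)}=\sigma^{-(\alpha+1)-1/p}\,\|s^{\alpha+1}g\|_{L^p}.$$
Finally,
$$\int_1^\infty\sigma^{-\alpha-1-1/p}\,d\sigma=\frac{1}{\alpha+1/p}=\frac{p}{\alpha p+1},$$
which converges since $\alpha+1/p>0$. Raising to the $p$-th power yields exactly the constant $\left(\frac{p}{\alpha p+1}\right)^p$ in (\ref{3.1}).

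The only delicate point is this sharp Hardy constant, and in particular checking that the extension by zero beyond $A$ and the truncation to $(0,A)$ do not spoil the Minkowski argument. Since $\phi\in C^1(\overline I)$, all integrals are finite and the manipulations are justified. Summing the two estimates proves the lemma.
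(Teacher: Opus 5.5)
Your proof is correct. Its skeleton matches the paper's:
\begin{itemize}
\item You use the same split $\phi-\lambda=(\beta-\lambda)+(\phi-\beta)$ with the convexity bound. You apply it pointwise, while the paper applies it to the $L^p$ norms after the triangle inequality; the two are equivalent.
\item You use the same estimate for the constant term. Your superadditivity $(A-\epsilon)^q+\epsilon^q\le A^q$ is exactly the paper's $\theta^q+(1-\theta)^q\le 1$.
\end{itemize}

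The genuine difference is in the Hardy step. The paper integrates by parts against the primitive $W(x)=(b-x)^{\alpha p+1}/(\alpha p+1)$ of the weight, using $g(a)=0$ and $W(b)=0$ to kill the boundary term. It then splits $(b-x)^{\alpha p+1}=(b-x)^{\alpha(p-1)}(b-x)^{1+\alpha}$, applies H\"older, and divides by $\|(b-x)^\alpha g\|_{L^p(I)}^{p-1}$. That route needs three things: the a.e.\ formula for $\frac{d}{dx}|g|^p$, a separate treatment of $p=1$, and a check that the norm being divided out is finite and nonzero.

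Your dilation $\tau=t\sigma$ combined with Minkowski's integral inequality reaches the same sharp constant, $\int_1^\infty\sigma^{-\alpha-1-1/p}\,d\sigma=\frac{p}{\alpha p+1}$, in a single chain of inequalities. It treats all $p\ge 1$ uniformly and involves no division. The extension of $g$ by zero is harmless: for $\sigma\ge 1$ the substituted integral over $(0,A\sigma)$ collapses to $(0,A)$, so the truncation costs nothing.

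The paper's route is more elementary, since it avoids Minkowski's integral inequality. Yours makes the origin of the constant more transparent.
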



\begin{proof}

\medskip
\noindent\textbf{Step 1.} We first prove the weighted Hardy inequality
\begin{equation}
\label{3.2}
\norm{(b-x)^{\alpha}(\phi-\beta)}_{\Lp{I}}  \le \frac{p}{\alpha p+1}\norm{(b-x)^{1+\alpha}\phi'}_{\Lp{I}} .
\end{equation}
Set $g=\phi-\beta$, so that $g(a)=0$, and define
$$
W(x)=\int_{x}^{b}(b-s)^{\alpha p}ds=\frac{(b-x)^{\alpha p+1}}{\alpha p+1},
\quad\text{so}\quad 
W'=-(b-x)^{\alpha p},\quad W(b)=0 .
$$
Hence we get integrating by parts,
\begin{align*}
\int_{I}(b-x)^{\alpha p}|g|^{p}\dx&=\int_{I}(-W')\,|g|^{p}\dx\\
&=\Bigl[-W|g|^{p}\Bigr]_{a}^{b}+\int_{I}W\,\frac{d}{dx}|g|^{p}\dx .
\end{align*}
The boundary term vanishes,  thus using $\dfrac{d}{dx}|g|^{p}=p\,|g|^{p-1}\sgn(g)\,\phi'$ together with $W\ge 0$ we obtain
$$
\int_{I}(b-x)^{\alpha p}|g|^{p}\dx \le \frac{p}{\alpha p+1}\int_{I}(b-x)^{\alpha p+1}\,|g|^{p-1}|\phi'|\dx .
$$
For $p=1$ we are done. Assume now $p>1.$ Next split the weight as $(b-x)^{\alpha p+1}=(b-x)^{\alpha(p-1)}\,(b-x)^{1+\alpha}$ (the exponents add to $\alpha p+1$), so that the integrand equals
$\bigl[(b-x)^{\alpha}|g|\bigr]^{p-1}\cdot(b-x)^{1+\alpha}|\phi'|$, and apply H\"older inequality with
exponents $p'$ and $p$ to get
$$
\int_{I}(b-x)^{\alpha p}|g|^{p}\dx \le  \frac{p}{\alpha p+1} \norm{(b-x)^{\alpha}g}_{\Lp{I}}^{p-1}
\norm{(b-x)^{1+\alpha}\phi'}_{\Lp{I}} .
$$
Dividing by $\norm{(b-x)^{\alpha}g}_{\Lp{I}}^{p-1}$ yields (\ref{3.2}) (if this norm
vanishes then $g\equiv 0$ and (\ref{3.2}) is trivial). \\
\medskip
\noindent\textbf{Step 2.} Now we prove an estimate on $E$. For the constant $c=\beta-\lambda$, the substitution $t=b-x$ gives
$$
\frac{\displaystyle\int_{I}(b-x)^{\alpha p}\dx}{\displaystyle\int_{E}(b-x)^{\alpha p}\dx}
=\frac{|I|^{\alpha p+1}}{|I|^{\alpha p+1}-(|I|-|E|)^{\alpha p+1}}\\
=\frac{1}{1-(1-\theta)^{q}},
$$
where
$$
\theta=\frac{|E|}{|I|}\in[0,1],\quad q=\alpha p+1\ge 1 .
$$
Since $q\ge 1$ implies $\theta^{q}+(1-\theta)^{q}\le\theta+(1-\theta)=1$, we have
$1-(1-\theta)^{q}\ge\theta^{q}$, hence
\begin{equation}
\label{3.3}
\norm{(b-x)^{\alpha}(\beta-\lambda)}_{\Lp{I}}^{p}
\;\le\;
\left(\frac{|I|}{|E|}\right)^{\!\alpha p+1}
\norm{(b-x)^{\alpha}(\beta-\lambda)}_{\Lp{E}}^{p} .
\end{equation}

\medskip
\noindent\textbf{Step 3.} In the last step we combine the obtained estimates. By the triangle inequality in $\Lp{I}$ and the convexity inequality
$(u+v)^{p}\le 2^{p-1}(u^{p}+v^{p}),$ for $u,v\geq 0,$ we arrive at
$$
\norm{(b-x)^{\alpha}(\phi-\lambda)}_{\Lp{I}}^{p}
\le
2^{\,p-1}\norm{(b-x)^{\alpha}(\phi-\beta)}_{\Lp{I}}^{p}
+2^{\,p-1}\norm{(b-x)^{\alpha}(\beta-\lambda)}_{\Lp{I}}^{p} .
$$
Inserting (\ref{3.2}) into the first term and (\ref{3.3}) into the second gives (\ref{3.1}).
\end{proof}

The next step is to use the properties of uniformly Lipschitz domains and the previous result to prove similar estimates in higher dimensions close to the boundary of the domain. This is typically done by localization of the boundary using a cover and applying 
one-dimensional estimates locally. The number $\varepsilon /(5+4L)$ will appear in several places of the manuscript, so we set for brevity  $\varepsilon_L=\varepsilon /(5+4L)$ in what follows.


\begin{lemma}
\label{Lem:3.2}
     Let $\Omega \subset \mathbb{R}^n,$ $L,R, \varepsilon,p,\alpha,\GG,\Gd_\GG$ be as in Theorem~\ref{Thm:2.2} and let 
     $x_* \in \GG\subset\dOm,\ r \in(0, \varepsilon_L].$ There exists a constant $C=C(n,p,\Ga,L),$ such that for any $u \in W_{\mathrm{loc}}^{1, p}(\Omega,\mathbb R)$ there exist 
 $a \in \mathbb{R}$ such that
\begin{equation}
\label{3.4}
\|\delta_\GG(x)^\Ga (u-a)\|_{L^p(\Omega\cap B_r(x_*))} \leq C \|\delta_\GG(x)^{(\alpha+1)}\nabla u\|_{L^p(\Omega)}.
\end{equation}
\end{lemma}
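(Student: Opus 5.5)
The plan is to localize in the Lipschitz chart at $x_*$ and reduce to the one-dimensional estimate of Lemma~\ref{Lem:3.1} along vertical segments, as in [Lemma 5.7, \ref{bib:Con.Zwi.}]. The new ingredient is a comparison argument between the vertical distance to the graph and $\delta_\GG$.

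\textbf{Step 1: set-up in the chart.} By Definition~\ref{Def:2.1}, after applying the isometry $\BA_{x_*}$ we may assume $x_*=0$ and $B_\varepsilon(0)\cap\Omega=B_\varepsilon(0)\cap\{y_n<f(y')\}$ with $f(0)=0$ and $\mathrm{Lip}(f)\le L$. Since $r\le\varepsilon_L=\varepsilon/(5+4L)$, the cylinder
$$Z=\{(y',y_n):|y'|<r,\ f(y')-(2+2L)r<y_n<f(y')\}$$
lies in $B_\varepsilon(0)\cap\Omega$ and contains $\Omega\cap B_r(0)$. I will work on $Z$ and take as reference set the bottom slab
$$E_Z=\{(y',y_n):|y'|<r,\ f(y')-(2+2L)r<y_n<f(y')-(1+L)r\}.$$
On $E_Z$ we have $\delta_\GG\ge\mathrm{dist}(\cdot,\partial\Omega)\ge c(L)r$, and also $\delta_\GG\le|x|\le C(L)r$ on all of $Z$. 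So on $E_Z$ the weights $\delta_\GG^\alpha$ and $\delta_\GG^{1+\alpha}$ are comparable to $r^\alpha$ and $r^{1+\alpha}$. I choose $a$ to be the average of $u$ over $E_Z$. Then the unweighted Poincaré inequality on the Lipschitz set $E_Z$ (scale $r$, character depending only on $L$), multiplied by $r^{\alpha}$, gives
$$\|\delta_\GG^\alpha(u-a)\|_{L^p(E_Z)}\le C\|\delta_\GG^{1+\alpha}\nabla u\|_{L^p(E_Z)}.$$

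\textbf{Step 2: vertical segments.} For each fixed $y'$, apply Lemma~\ref{Lem:3.1} (after density, to smooth $u$) on $I=(f(y')-(2+2L)r,f(y'))$ with $E$ the lower half of $I$, $b=f(y')$ and $\lambda=a$. I average over the starting point in $E$ rather than using the single value $\beta$, which is a harmless modification of Step 2 of that proof. The distance $\rho(x)=f(y')-y_n=b-y_n$ satisfies $\rho\le\sqrt{1+L^2}\,\mathrm{dist}(x,\partial\Omega)\le\sqrt{1+L^2}\,\delta_\GG(x)$. Hence the gradient term $\rho^{1+\alpha}|\partial_n u|$ is controlled by $\delta_\GG^{1+\alpha}|\nabla u|$, and $|I|/|E|=2$. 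Integrating in $y'$ then gives
$$\|\rho^\alpha(u-a)\|_{L^p(Z)}\le C\|\delta_\GG^{1+\alpha}\nabla u\|_{L^p(\Omega)}.$$

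\textbf{Step 3: replacing $\rho^\alpha$ by $\delta_\GG^\alpha$ (main obstacle).} The left side of (\ref{3.4}) carries $\delta_\GG^\alpha$, and we only have $\rho\lesssim\delta_\GG$, which is the wrong direction. Near $\Gamma$ this is harmless, but near $\partial\Omega\setminus\Gamma$ the weight $\rho$ vanishes while $\delta_\GG$ stays of size up to $r$. I split $Z$ into two parts:
$$Z_1=\{\delta_\GG\le K\rho\},\qquad Z_2=\{\delta_\GG>K\rho\},$$
with $K=K(L)$ large. On $Z_1$, Step 2 suffices directly. On $Z_2$ the points are much closer to $\partial\Omega\setminus\GG$ than to $\GG$. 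Here I cover $Z_2$ by Whitney-type cubes $Q$ relative to $\partial\Omega$, of side $\ell_Q\sim\mathrm{dist}(Q,\partial\Omega)$, enlarged to boxes $\widetilde Q$ extending vertically down to depth $\sim\delta_\GG$. On each $\widetilde Q$ we have $\delta_\GG\sim\delta_\GG(Q)$, so both weights are essentially constant. I then use an unweighted 1D Poincaré along vertical segments of length $\sim\delta_\GG(Q)$ from $Q$ down into $Z_1$. This gives
$$\delta_\GG(Q)^{\alpha p}\int_Q|u-a|^p\lesssim\delta_\GG(Q)^{\alpha p}\int_{\widetilde Q\cap Z_1}|u-a|^p+\int_{\widetilde Q}\delta_\GG^{(1+\alpha)p}|\nabla u|^p.$$
On $\widetilde Q\cap Z_1$ we have $\delta_\GG\sim\rho$, so the first term on the right is absorbed by Step 2. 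The delicate point, and where I expect most of the work, is the finite-overlap count of the enlarged boxes $\widetilde Q$: they must overlap boundedly with a constant depending only on $n$ and $L$, uniformly over all configurations of $\GG$. I would try to obtain this from the $1$-Lipschitz property of $\delta_\GG$ together with the cone condition given by $\mathrm{Lip}(f)\le L$. Summing over $Q$ and combining with Steps 1 and 2 gives (\ref{3.4}) with $C=C(n,p,\alpha,L)$.
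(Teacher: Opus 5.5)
\textbf{Verdict: genuine gap in Step 3 (fixable), but your Step 3 is needed and the paper's proof lacks it.}

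Your Steps 1--2 are the paper's argument in slightly different coordinates. The paper uses the slab $T=B_r'\times(-3\tau,-2\tau)$ with $\tau=r(1+L)$, the Poincar\'e inequality on $T$, and Lemma~\ref{Lem:3.1} on $I=(-3\tau,f(x'))$. Integrating in $x'$ it obtains (\ref{3.7}), which after $\rho\lesssim\delta_\Gamma$ on the right is exactly your Step~2 bound with weight $\rho^\alpha$ on the left. Your averaging over starting points in $E$ is in fact what is needed to produce the term $\int_E|u-a|^p$ written in (\ref{3.6}). To pass to (\ref{3.4}) the paper then asserts $\delta_\Gamma(x)\le|x-x_*|\le\sqrt{9\tau^2+r^2}\le 3|f(x')-x_n|$ on all of $U$. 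The last inequality needs $f(x')-x_n\gtrsim\tau$ and fails near the graph, i.e.\ precisely on your set $Z_2$. So your diagnosis is correct: when $\alpha>0$ and $\Gamma$ does not contain $\partial\Omega\cap B_\varepsilon(x_*)$, a step like your Step~3 is genuinely required, and the paper does not provide one.

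The gap is in how you propose to close Step~3. The boxes $\tilde Q$ do \emph{not} have bounded overlap, so the Lipschitz/cone argument you have in mind cannot work. Take $f\equiv 0$, $\Gamma=\{x_*\}$ and $|z'|=r/2$. The column over $z'$ meets Whitney cubes of every dyadic size $2^{-j}\to 0$ near the graph. Each of their boxes has vertical length $\kappa\,\delta_\Gamma(Q)\approx\kappa r/2$, independent of $j$. Hence the point $(z',-\kappa r/4)$ lies in infinitely many $\tilde Q$, and summing your per-cube inequality puts an unbounded multiplicity in front of $\int\delta_\Gamma^{(1+\alpha)p}|\nabla u|^p$.

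What rescues the argument is the gain you discarded. Averaging over a piece of the column of length $\sim d_Q:=\delta_\Gamma(Q)$, and integrating only over the height $\ell_Q$ of $Q$, produces a factor $\lesssim\ell_Q/d_Q$ in front of both terms on the right. Moreover $\sum_Q(\ell_Q/d_Q)\chi_{\tilde Q}\lesssim 1$, since the sides of the cubes stacked above a point form (boundedly many copies of) a geometric series bounded by $\sim d_Q/K$.

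It is cleaner to drop the cubes altogether. For $x\in Z_2$ and $d=\delta_\Gamma(x)$,
\[
|u(x)-a|^p\le\frac{2^{p-1}(K-1)}{d}\int_{d/(K-1)}^{2d/(K-1)}|u(x-te_n)-a|^p\,dt+2^{p-1}\Big(\frac{2d}{K-1}\Big)^{p-1}\int_0^{2d/(K-1)}|\partial_n u(x-se_n)|^p\,ds .
\]
For $K=K(L)$ large one has $x-te_n\in Z_1\cap Z$ when $d/(K-1)\le t\le 2d/(K-1)$, and $\delta_\Gamma\simeq d$ along the whole segment. Multiply by $d^{\alpha p}$, integrate over $Z_2$, and exchange the order of integration on each vertical line. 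A point $z$ is reached only from points $x$ within vertical distance $\lesssim\delta_\Gamma(z)/K$ above it, where $\delta_\Gamma(x)\simeq\delta_\Gamma(z)$. So the factors $1/d$ and $d^{p-1}$ are exactly compensated, and
\[
\int_{Z_2}\delta_\Gamma^{\alpha p}|u-a|^p\lesssim\int_{Z_1}\delta_\Gamma^{\alpha p}|u-a|^p+\int_{Z}\delta_\Gamma^{(1+\alpha)p}|\nabla u|^p .
\]
Step~2, together with $\delta_\Gamma\le K\rho$ on $Z_1$, then finishes the proof.
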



\begin{proof}
    By the definition of $(L,R)$-Lipschitz domains, we have that 
    $B_\varepsilon(x_*) \cap \Omega=B_\varepsilon (x_*) \cap V$, where 
$$
V=A\left\{\left(y^{\prime}, y_n\right) \in \mathbb{R}^{n-1} \times \mathbb{R}: y_n<f\left(y^{\prime}\right)\right\},
$$ 
after an isometry $\BA$ and for an $L$-Lipschitz function $f$. Since all the inequalities are invariant under rotations and translations we can, w.l.o.g. assume that $\BA=\BI$, $x_*=0$ and $f(0)=0$ to simplify the notation. Let $\tau=r(1+L) \in(0,\varepsilon / 4)$, and consider the cylinder $T=B_r^{\prime} \times(-3 \tau,-2 \tau)$ where $B'_r$ is the projection of $B_r(x_*)$ into $x_n = 0$. For any $x^{\prime} \in B_r^{\prime}$ we have $f\left(x^{\prime}\right) \geq-r L>-\tau$, and therefore $4\tau\geq f\left(x^{\prime}\right)-x_n \geq \tau$ for all $\left(x^{\prime}, x_n\right) \in T$. Further, since $(3\tau)^2+r^2=(9(L+1)^2+1)r^2< \varepsilon^2$, we obtain that $T$ is still inside  $B_{\varepsilon} \cap V$ as we can see in the figure \ref{fig:wK}. 
\begin{figure}[!h]
    \centering
    \includegraphics[scale = 0.5]{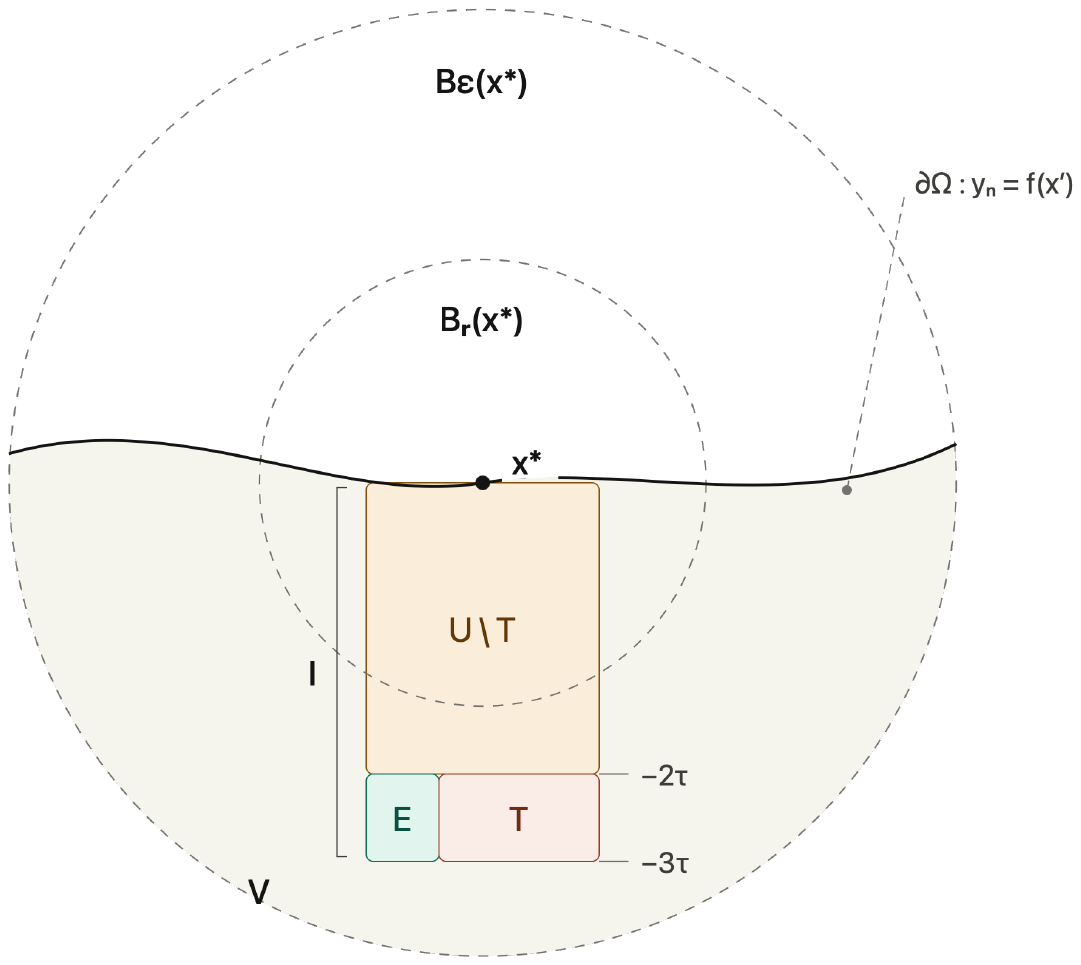}
    \caption{Sketch of the geometry in the construction of Lemma~3.2}
    \label{fig:wK}
\end{figure}
Additionally, since  $\operatorname{diam}(T)\leq c(n)\tau,$ by the standard Poincaré inequality there exists $a \in \mathbb{R}$ such that
\begin{align}
\label{3.5}
\int_T|f(x')-x_n|^{\alpha p}|u-a|^p d x &\leq 4^{\Ga p} \tau^{\Ga p}\int_T|u-a|^p dx\\ \nonumber
&\leq 4^{\alpha p} C(n,p,L) \tau^{(1+\alpha)p} \int_T|\nabla u|^p dx\\ \nonumber
&\leq 4^{\alpha p} C(n,p,L) \int_T|f(x')-x_n|^{(1+\alpha) p}|\nabla u|^p dx.
\end{align}
In the next step we will apply Lemma \ref{Lem:3.1} to $u\left(x^{\prime}, \cdot\right)$ for each $x^{\prime} \in B_r^{\prime}$, with $I=\left(-3 \tau, f\left(x^{\prime}\right)\right)$ and $E=(-3 \tau,-2 \tau)$. Since $\tau=|E|<|I| \leq 4 \tau$, we have
\begin{align}
\label{3.6}
\int_I|f(x')-x_n|^{\alpha p}|u(x',x_n)-a|^p d x_n &\leq C(n,p,\alpha,L)\int_I\left|f\left(x^{\prime}\right)-x_n\right|^{(\alpha+1)p}|\nabla u|^p d x_n\\ \nonumber
&+C(n,p,\alpha,L) \int_E|f(x')-x_n|^{\alpha p}|u(x',x_n)-a|^p d x_n.
\end{align}
Let $U=\left(B_r^{\prime} \times(-3\tau, \infty)\right) \cap V$, so that $B_r(x_*) \cap \Omega=B_r(x_*) \cap U$ and 
$U \subseteq B_{\varepsilon}(x_*) \cap V=B_{\varepsilon}(x_*) \cap \Omega$ 
as we can see in Fgure \ref{fig:wK}. We integrate over $x^{\prime} \in B_r^{\prime}$, 
and use both of the above inequalities to conclude
\begin{align}
\label{3.7}
\int_U|f(x')-x_n|^{\alpha p}|u-a|^p d x 
& \leq C(\Ga,p,n,L) \int_{B_r^{\prime}} \int_I\left|f\left(x^{\prime}\right)-x_n\right|^{(\alpha+1)p}|\nabla u|^p d x_n d x^{\prime}\\ \nonumber
&\quad + C(\Ga,p,n,L) \int_T\left|f\left(x^{\prime}\right)-x_n\right|^{(\alpha+1)p}|\nabla u|^p d x \\ \nonumber
& \leq C(\Ga,p,n,L) \int_U\left|f\left(x^{\prime}\right)-x_n\right|^{(\alpha+1)p}|\nabla u|^p d x.
\end{align}
Additionally, by the construction of $U$ we have that $|f\left(x^{\prime}\right)-x_n|$ is comparable with 
$\delta(x)= \delta_{\partial \GO}(x)$ for $x\in U.$ In particular, in [Lemma5.7, \ref{bib:Con.Zwi.}] it is proven that there exists $C(L)>0$, such that
$$
\left|f\left(x^{\prime}\right)-x_n\right| \leq C(L) \Gd_{\partial \GO}(x)\leq C(L)\delta_\GG(x) \quad \text { for all }\quad x \in U ,
$$
and for a lower bound, we use the fact that $x^*\in \GG$ to get
\begin{align*}
    \Gd_\Gamma(x)&\leq |x-x^{\ast}|\\
    &\leq\sqrt{9\tau^2+r^2}\\
    &\leq 3 |f\left(x^{\prime}\right)-x_n|.
\end{align*}
The proof of the lemma is completed since $B_r(x_{\ast}) \cap V \subseteq U$.
\end{proof}


\section{Proof of Theorem~\ref{Thm:2.4}}
\setcounter{equation}{0}
\label{Sec:4}

In this section, we will prove the weighted Poincaré inequalities in Theorem~\ref{Thm:2.4}. The proof is slightly more involved than the one in [\ref{bib:Aco.Cej.Dur.},\ref{bib:Con.Zwi.}] as a standard boundary plus an interior cover does not work due to the specifics of the distance taken in the inequalities. For bulk domains, will need to construct a uniform $\varepsilon-$scale cover, while for thin domains a nice domain transformation through a suitable change of variables will do the trick. 


\begin{proof}[Proof of Theorem 2.4\textup{(i)}]

It is sufficient to prove the scalar-valued case. For $a,b>0,$ the symbolics $a\simeq b$ means $ca\leq b\leq\frac{1}{c}a$ for some 
$c=c(n,p,\Ga,L,R.)$


\begin{lemma}[Uniform covering of $\Omega$]
\label{Lem:4.1}
Let $\GO\subset\mathbb R^{n}$ be an open, connected $(L,R)$-Lipschitz set as in Definition~\ref{Def:2.1}, and
let $\Gamma\subseteq\partial\GO$ be nonempty and closed. There exist a natural number $N\in\mathbb N$,
open sets $U_0,\dots,U_N\subseteq\GO$, numbers $r_0,\dots,r_N>0$, a map
$\pi:\{1,\dots,N\}\to\{0,\dots,N\}$ with $\pi(i)<i$, and balls $\tilde B_i\subseteq U_i\cap U_{\pi(i)}$
$(1\le i\le N)$, with the following properties.
\begin{enumerate}
\item $\GO=\bigcup_{i=0}^{N}U_i$.
\item $N$ depends only on $n$, $L$ and $R,$ and $r_i \simeq \eps$ for every $i\in \{1,\dots,N\}$.
\item For every $i\in \{0,\dots,N\}$ one has either (a) or (b) below, where
\begin{enumerate}
\item $U_i=\GO\cap B_{r_i}(x_i^{*})$ for some $x^{*}_i\in\Gamma$ and $r_i\le\eps/(5+4L)$, so that
Lemma 3.2 applies on $U_i$.
\item $\dist(U_i,\GG)\simeq r_i$, $\diam U_i\simeq r_i$, and $U_i$ supports the standard Poincaré
inequality with a uniform constant $C=c(n,p,L)\,r_i.$
\end{enumerate}
\item For every $1\leq i\leq N$ one has $\mathrm{diam} \tilde B_i \simeq r_i.$
\end{enumerate}
\end{lemma}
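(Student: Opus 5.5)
Before building the cover, I would fix the scale $\eps$ from Definition~\ref{Def:2.1} and a small parameter $\eta=c(L)\eps_L$. All radii will be comparable to $\eps$, and hence to $\diam\GO$, since $\diam\GO<R\eps$. The set $\GO$ is split into two parts. The first is the near-$\GG$ part $\GO_\GG=\{x\in\GO:\delta_\GG(x)<\eta/2\}$. The second is the far part $\GO\setminus\GO_\GG$, where $\delta_\GG\ge\eta/2$.

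\textbf{Near part.} I would choose a maximal $\eta/4$-separated set of points $x_i^*\in\GG$ and put $U_i=\GO\cap B_{\eta}(x_i^*)$ with $r_i=\eta\le\eps_L$. These sets cover $\GO_\GG$ and are of type (a), so Lemma~\ref{Lem:3.2} applies. Their number is bounded by a volume argument: disjoint balls of radius $\eta/8$ inside a ball of radius $R\eps+\eta$ give $N\le C(n)(R\eps/\eta)^n=C(n,L,R)$.

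\textbf{Far part.} I would take a maximal $\eta/8$-separated set of points $y_j\in\GO\setminus\GO_\GG$ and treat two cases.
\begin{itemize}
\item[(1)] If $\dist(y_j,\dOm)\ge\eta/4$, set $U_j=B_{\eta/8}(y_j)$, a ball. It supports Poincaré with constant $c(n,p)r_j$. It also satisfies $\dist(U_j,\GG)\ge \eta/2-\eta/8\simeq r_j$, and $\dist(U_j,\GG)\le \diam\GO\le R\eps\simeq r_j$.
\item[(2)] Otherwise $y_j$ lies near some $z\in\dOm\setminus\GG$. Using the Lipschitz chart at $z$, I would take $U_j=\GO\cap C_j$, where $C_j$ is a cylinder of radius $\rho\simeq\eta/8$ in the chart's graph coordinates. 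Such a set is a bi-Lipschitz image of a cylinder with constants depending on $L$, so it supports Poincaré with constant $c(n,p,L)\rho$, as in the construction of $T$ and $U$ in Lemma~\ref{Lem:3.2}. Because $\delta_\GG(y_j)\ge\eta/2$, it also keeps distance $\simeq\eta$ from $\GG$ once $\rho$ is small relative to $\eta$.
\end{itemize}
The count again follows from packing.

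\textbf{Tree structure $\pi$ and balls $\tilde B_i$.} This is the step I expect to be the main obstacle. Let $U_0$ be any set of the family and build a spanning tree of the overlap graph by breadth-first search. Two sets are adjacent when $U_i\cap U_k$ contains a ball of radius $\simeq\eps$. The enumeration then gives $\pi(i)<i$. Two things must be checked. First, the graph is connected: this uses that $\GO$ is connected and that the cover is an open cover of a connected set, so any two pieces are joined by a chain of overlapping pieces. Second, every overlap contains a ball of size $\simeq\eps$, not just a nonempty intersection.

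For the second point I would first shrink all pieces by a fixed factor to get sets $U_i'$ that still cover $\GO$; this is possible because the separation is a fixed fraction of the radii. If $U_i'\cap U_k'\ni x$, then a ball around $x$ of radius $\simeq\eta$ lies in $U_i\cap U_k$ when $x$ is far from $\dOm$. When $x$ is near $\dOm$, I would use the Lipschitz graph structure to move from $x$ inward along $-e_n$ a distance $\simeq\eta$ to find such a ball. This downward shift, as in the choice of $T$ in Lemma~\ref{Lem:3.2}, is the delicate geometric point. It must be done consistently in possibly different charts for the two pieces, and the constants $(5+4L)$ give room for this. Properties 1--4 then follow, with $N$ bounded by $C(n,L,R)$ and all $r_i,\diam\tilde B_i\simeq\eps$.
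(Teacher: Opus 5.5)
Your plan has the same architecture as the paper's proof. It uses $\Gamma$-centred pieces $\Omega\cap B_r(x_i^*)$ with $r\le\eps_L$, bi-Lipschitz collars at boundary points away from $\Gamma$, and interior balls. It covers $\Omega$ by cores $U_i^\circ\subset U_i$, takes the graph of intersecting cores (connected because $\Omega$ is), and uses a breadth-first spanning tree to get $\pi(i)<i$. An inward shift along $-e_n$ (Claim~1 in the paper's proof) then produces $\tilde B_i$.

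The step you flag as the main obstacle is simpler than you fear. The paper first proves, separately for each piece, the metric margin property (\ref{4.4}): $B_\eta(w)\cap\Omega\subseteq U_i$ whenever $w\in U_i^\circ$. Then for $w\in U_i^\circ\cap U_{\pi(i)}^\circ$, Claim~1 in the single chart at a boundary point nearest to $w$ gives $B_{c_0\eta}(y)\subseteq B_\eta(w)\cap\Omega\subseteq U_i\cap U_{\pi(i)}$. No compatibility between the charts of the two pieces is ever needed.

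What fails as written is your choice of scales in the far part. Your separation $\eta/8$ and case threshold $\eta/4$ are comparable to the buffer $\eta/2$ that separates the far part from $\Gamma$. In case (2) the chart set $U_j=\Omega\cap C_j$ must contain $\Omega\cap B_{\eta/8}(y_j)$, with $|y_j-z|$ up to $\eta/4$. So a collar centred at $z$ needs horizontal radius about $3\eta/8$. Its closure then contains graph points at distance up to $\tfrac38\eta\sqrt{1+L^2}+\tfrac14\eta>\tfrac12\eta$ from $y_j$, while you only know $\delta_\Gamma(y_j)\ge\eta/2$.

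Nothing then prevents $\overline{U_j}$ from meeting $\Gamma$, and the lower bound $\dist(U_j,\Gamma)\gtrsim r_j$ in 3(b) is lost. That lower bound is exactly what the proof of Theorem~\ref{Thm:2.4}(i) uses to get $r_i^{1+\alpha}\|\nabla u\|_{L^p(U_i)}\le C\|\delta_\Gamma^{1+\alpha}\nabla u\|_{L^p(U_i)}$ on these pieces. Your proviso ``once $\rho$ is small relative to $\eta$'' cannot be met, because covering forces $\rho\gtrsim\eta/8$. Likewise, in case (1) the ball radius equals the separation, which leaves no room to shrink to covering cores.

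The fix is a genuine second scale. Carry out the whole far-part construction (separation, case threshold, ball and collar sizes, each piece a fixed multiple of its core) at a scale $s\le c\,\eta/(1+L)$ with a small absolute $c$. Then every collar lies in $B_{10(1+L)s}(z)$ with $10(1+L)s$ a small fraction of $\eta$.

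The paper's own constants need the same repair. With $s=\rho/(2(1+L))$ one has $C_j=D(z_j,s)\subseteq B_{5\rho}(z_j)$, while only $\delta_\Gamma(z_j)\ge\rho$ is known. Taking $s\le\rho/(20(1+L))$ fixes it.
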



\begin{proof}[Proof of Lemma~\ref{Lem:4.1}]
We divide the construction into several steps. \\
\noindent\textbf{Claim 1: Boundary sections contain comparable balls.} \textit{For $c_0=\dfrac{1}{5(1+L)^{2}}$ and for every $z\in\GO$ and every $t\in(0,\eps/2],$ there is $y\in\GO$
such that $B_{c_0t}(y)\ \subseteq\ B_t(z)\cap\GO.$}\\
\begin{proof}[Proof of Claim 1]
This is a direct consequence of the domain Lipschitz property. Indeed, we have that if  $\Gd_{\partial \GO}(z)\ge t/2$, then just
take $y=z$. Assume now $\Gd_{\partial \GO}(z)<t/2$ and pick $w\in\partial\GO$ with $|w-z|=\Gd_{\partial \GO}(z)$. 
After an isometry we have $w=0$, $f(0)=0$, $\mathrm{Lip} f\le L$ and $B_\eps(0)\cap\GO=B_\eps(0)\cap V$ with
$V=\{y_n<f(y')\}$. Put $\Gl=\frac{1}{4(1+L)}$ and $y=z-\lambda te_n .$
Then $|y-z|+c_0t=\lambda t+c_0t<t$, so $B_{c_0t}(y)\subseteq B_t(z)$. Also we have $|y-w|+c_0t\le t/2+\lambda t+c_0t<t\le\eps/2,$
so $B_{c_0t}(y)\subseteq B_\eps(w)$. Finally, let $v\in B_{c_0t}(y)$. Since $z\in\GO\cap B_\eps(w)=V\cap B_\eps(w)$ we have $f(z')>z_n$, and therefore
\begin{align*}
f(v')-v_n & \ge \big(f(z')-L|v'-z'|\big)-\big(z_n-\lambda t+c_0t\big)\\
&>\lambda t-(1+L)c_0t \\
&>0,
\end{align*}
hence $B_{c_0t}(y)\subseteq V\cap B_\eps(w)=\GO\cap B_\eps(w)$. Claim 1 is proven. 
\end{proof}

\noindent\textbf{Claim 2: Boundary Poincar\'e sets.} \textit{Let $z\in\partial\GO$ and $0<\sigma\le\frac{2\varepsilon_L}{5}.$ In the frame of Definition 2.1 at $z$ define the
collar
$$
D(z,\sigma):=\big\{(y',y_n) \ : \ |y'|<5\sigma,\ f(y')-5(1+L)\sigma<y_n<f(y')\big\}.
$$
Then the below statements hold:
\begin{itemize}
\item[(i)] $\GO\cap B_{4\sigma}(z)\ \subseteq\ D(z,\sigma)\ \subseteq\ \GO \cap B_{10(1+L)\sigma}(z)$.
\item[(ii)] $D(z,\sigma)=\Phi(W)$ where $W=B'_{5\sigma}\times(-5(1+L)\sigma,0)$ and
the mapping $\Phi(y',t)=(y',t+f(y'))$ satisfies $\det D\Phi=1$ and \  $\mathrm{Lip}\Phi, \mathrm{Lip}\Phi^{-1} \le1+L$.
Consequently $D(z,\sigma)$ is open and connected, $\diam D(z,\sigma)\simeq\sigma$, and $D(z,\sigma)$ supports the standard Poincar\'e inequality. 
\end{itemize}
}
\begin{proof}[Proof of Claim 2]

(i) If $y\in\GO\cap B_{4\sigma}(z)$ then $y\in V$, i.e.\ $y_n<f(y')$, and $|y'|\le4\sigma,$
while
\begin{align*}
f(y')-y_n&=\big(f(y')-f(0)\big)+\big(0-y_n\big)\\
&\le L|y'|+|y_n|\\
&<4(1+L)\sigma .
\end{align*}
Conversely, if $y\in D(z,\sigma)$ then
\begin{align*}
|y| & <|y'|+|f(y')|+5(1+L)\sigma \\
& =10(1+L)\sigma \\ 
& \le\eps, 
\end{align*}\
thus  $y\in V\cap B_\eps(z)=\GO\cap B_\eps(z)$.

(ii) Obviously $\det D\Phi=1$ and $|\Phi(a)-\Phi(b)|\le(1+L)|a-b|$, and the same for
$\Phi^{-1}(y',y_n)=(y',y_n-f(y'))$. Now the Poincaré inequality on the cylinder $W$ transfers it to a 
Poincaré inequality on $D$ by the change of variables through $\Phi.$ Claim 2 is proven.
\end{proof}

We now return to the proof of Lemma~\ref{Lem:4.1}. Set 
$$
\rho=\frac{\varepsilon_L}{4},\quad s=\frac{\rho}{2(1+L)},\quad \eta=\frac s4.
$$
Lemma 3.2 is applicable on $\GO\cap B_{4\rho}(x^{*})$ for every $x^{*}\in\Gamma,$ and Claim 2 is applicable with $\sigma=s$. Define the cover of $\GG$ as follows: take $\{x^{*}_1,\dots,x^{*}_{N_1}\}$ to be a maximal $\rho$-separated subset of $\Gamma,$ and define
\begin{equation}
\label{4.1}
A_i^{\circ}:=\GO\cap B_{3\rho}(x^{*}_i),\qquad A_i:=\GO\cap B_{4\rho}(x^{*}_i),\qquad
r(A_i):=4\rho,\quad i=1,\dots N_1.
\end{equation}
Now we define the cover of the remaining part of the boundary. Let $\gamma=\{z\in\partial\GO \ :\ \Gd_\GG(z)\ge\rho\}$ and let $\{z_1,\dots,z_{N_2}\}$ be a maximal $s$-separated subset of $\Gg$ (possibly empty, e.g. when $\Gamma=\partial\GO$). Define
\begin{equation}
\label{4.2}
C_j^{\circ}=\GO\cap B_{2s}(z_j),\qquad C_j=D(z_j,s),\qquad r(C_j)=s,\quad  j=1,\dots N_2.
\end{equation}
Now we define the Interior sets. Let $E=\{x\in\GO:\ \Gd_{\partial \GO}(x)\ge s\ \text{and}\ \Gd_\GG(x)\ge2\rho\}$ and let
$\{w_1,\dots,w_{N_3}\}$ be a maximal $(s/2)$-separated subset of $E$ (possibly empty). Put
\begin{equation}
\label{4.3}
B_k^{\circ}=B_{s/2}(w_k),\qquad B_k=B_{s}(w_k),\qquad r(B_k)=s,\quad  k=1,\dots N_3.
\end{equation}
All the sets and cores are open, all the cores are contained in the corresponding sets, and all the sets are contained in $\GO$. We relabel the family $\{A_i\}\cup\{C_j\}\cup\{B_k\}$ as $U_0,\dots,U_N,$ the core sets as $U_i^{\circ},$ and set $r_i=r(U_i)$ with the proper ordering to be defined later in the proof. Now, part 1 of the lemma easily follows from the definition of the sets $U_i,$ moreover we have $\GO=\cup U_i^{\circ}.$ Part 2 of the lemma follows from the fact that $\diam \overline\GO<R\eps,$ the construction, utilizing Claim 1. Part 3 of the lemma follows from Claim 2 and the construction. It is straightforward (utilizing Claim 1), that
\begin{equation}
\label{4.4}
w\in U_i^{\circ} \qquad\text{implies }\qquad \ B_{\eta}(w)\cap\GO\ \subseteq\ U_i.
\end{equation}
 Let now $G$ be the graph with the vertex set $\{0,\dots,N\}$ and an edge between $j$ and $k$ whenever
$U^{\circ}_j\cap U^{\circ}_k\ne\emptyset$. Let's prove that $G$ is connected. Assume in contradiction that it is not, and let $V_1$ be the vertex set of one connected component, $V_2$ the rest, both nonempty. Then the nonempty open sets
$O_1=\bigcup_{j\in V_1}U^{\circ}_j$ and $O_2=\bigcup_{j\in V_2}U^{\circ}_j$ would separate $\Omega,$ a contradiction. 
Because any connected graph has a spanning tree [\ref{bib:Bollobas}], we can choose a spanning tree of $G,$ label the root as $U_0,$  and enumerate the vertices in order of nondecreasing distance to the root. This automatically defines the mapping $\pi(i)$ as well, according to the rule that vertex $\pi(i)$ is the parent of vertex $i$ in the tree.  Finally we define the the overlap sets $\tilde B_i$. 
Fix $i\in \{1,\dots,N\}.$ There exists $w\in U^{\circ}_i\cap U^{\circ}_{\pi(i)}$, thus by (\ref{4.4}) we have 
$B_\eta(w)\cap\GO\ \subseteq\ U_i\cap U_{\pi(i)}.$ Claim 1 with $t=\eta\ (\le\eps/2)$ yields $y\in\GO$ with 
$B_{c_0\eta}(y)\subseteq B_\eta(w)\cap\GO$. Defining $\tilde B_i=B_{c_0\eta/2}(y)$ it is straightforward to check that 
$\tilde B_i\subseteq\ U_i\cap U_{\pi(i)}$ and the remaning parts of the lemma concerning $\tilde B_i.$ 
The proof of the lemma is complete now.

\end{proof}


We now prove Theorem~\ref{Thm:2.4}. For the sake of brevity, all constants $C,c>0$ in the proof of the theorem may depend only on $n,p,\Ga,L$ and $R.$ Set  $M=\|(\Gd_\GG)^{1+\alpha}\nabla u\|_{L_p(\Omega)}.$ For each set $U_i$, choose a constant $a_i\in\mathbb R$ as follows: If $U_i$ is a boundary coordinate set centered at a point of $\Gamma$, apply Lemma~3.2, while if $U_i$ is an interior set, let $a_i$ be the constant in the Poincar\'e inequality for $u$ on $U_i$. In either case we have
\begin{equation}
\label{4.5}
\|(\Gd_\GG)^\alpha(u-a_i)\|_{L_p(U_i)} \leq CM.
\end{equation}
Indeed, on a boundary set we apply Lemma~3.2 and on an interior set $U_i$, one has $\Gd_\GG\simeq r_i$, and therefore the standard Poincar\'e inequality yields
\begin{align*}
\|(\Gd_\GG)^\alpha(u-a_i)\|_{L_p(U_i)} &\leq C r_i^\alpha \|u-a_i\|_{L_p(U_i)} \\
&\leq C r_i^{1+\alpha}\|\nabla u\|_{L_p(U_i)} \\
&\leq C\|(\Gd_\GG)^{1+\alpha}\nabla u\|_{L_p(U_i)}\\
&\leq CM
\end{align*}
We now compare the constants $a_i$ associated with neighboring sets. Since $ \Gd_\GG(x)\simeq r_i$ on $\tilde B_i,$ and 
$|\tilde B_i|\geq c r_i^n,$ we have $\|(\Gd_\GG)^\alpha\|_{L_p(\tilde B_i)} \geq c r_i^{\alpha+n/p}.$ Hence
\begin{align}
\label{4.6}
r_i^{\alpha+n/p}|a_i-a_{\pi(i)}| &\leq C\|(\Gd_\GG)^\alpha(a_i-a_{\pi(i)})\|_{L_p(\tilde B_i)} \nonumber\\
&\leq C\|(\Gd_\GG)^\alpha(u-a_i)\|_{L_p(U_i)} + C\|(\Gd_\GG)^\alpha(u-a_{\pi(i)})\|_{L_p(U_{\pi(i)})}.
\end{align}
Choose now $a=a_0.$ Iterating (\ref{4.6}) along the parent chain connecting $U_i$ to $U_0$, and using the fact that the length of every such chain is bounded by a constant depending only on $n,L,$ and $R$, we obtain
\begin{equation}
\label{4.7}
r_i^{\alpha+n/p}|a_i-a| \leq CM.
\end{equation}
Since $\Gd_\GG\leq Cr_i$ on $U_i$, and $|U_i|\leq Cr_i^n$, it follows from (\ref{4.7}) that
\begin{equation}
\label{4.8}
\|(\Gd_\GG)^\alpha(a_i-a)\|_{L_p(U_i)} \leq C r_i^{\alpha+n/p}|a_i-a|\leq CM.
\end{equation}
Combining (\ref{4.5})-(\ref{4.8}) we arrive at
\begin{align*}
\|(\Gd_\GG)^\alpha(u-a)\|_{L_p(\Omega)}^p &\leq C \sum_{i=0}^N \|(\Gd_\GG)^\alpha(u-a)\|_{L_p(U_i)}^p \\
&\leq C \sum_{j=0}^N \|(\Gd_\GG)^\alpha(u-a_i)\|_{L_p(U_i)}^p+C  \sum_{i=0}^N \|(\Gd_\GG)^\alpha(a_i-a)\|_{L_p(U_i)}^p \\
&\leq CM^p.
\end{align*}
Therefore,
$$
\|(\Gd_\GG)^\alpha(u-a)\|_{L_p(\Omega)}
\leq C\|(\Gd_\GG)^{1+\alpha}\nabla u\|_{L_p(\Omega)}.
$$
This completes the proof of part (i) of the theorem. 
\end{proof}


\begin{proof}[Proof of Theorem 2.4\textup{(ii)}]

We prove the weighted Poincar\'e inequality on the thin domain by reducing the problem to a
uniformly Lipschitz reference domain. Recall that
$$
\Omega_h=\left\{x+t\,n(x): x\in S,\; -g_1^h(x)<t<g_2^h(x) \right\}.
$$
Define the normalized thickness functions
$$
a_h(x)=\frac{g_1^h(x)}{h}, \qquad b_h(x)=\frac{g_2^h(x)}{h}.
$$
By the assumptions on $g_1^h$ and $g_2^h$ we have
$$
1\leq a_h(x),b_h(x)\leq c_1 \quad \text{and}\quad |\nabla a_h(x)|+|\nabla b_h(x)|\leq c_2\quad \text{for all}\quad  x\in S.
$$
Under (H) and (\ref{2.3}), there exists $h_0>0$, depending only on $S$ and $c_1$, such that for $h\in(0,h_0)$ the map
$$
\Psi(x,t)=x+t\,n(x),\qquad x\in S,\ |t|\le c_1h,
$$
is injective and bi-Lipschitz with controllable constants onto its image. In particular the thin domain
$\GO_h=\Psi \{(x,t):-g_1^{h}(x)<t<g_2^{h}(x)\}$ satisfies $|\GO_h|\simeq h,$ $\diam\GO_h\simeq\diam S$, and
\begin{equation}
\label{4.9}
c \dist(x,\partial S)\le\Gd'(\Psi(x,t))\le C \dist(x,\partial S)\quad\text{for all}\quad x\in S, \  t\in (-g_1^{h}(x),g_2^{h}(x)).  
\end{equation}
Indeed, due to the compactness and $C^2-$regularity of $S,$ there exists $C=C(S)>0$ such that
\begin{equation}
\label{4.10}
|(x_1-x_2)\cdot n(x_1)|\leq C|x_1-x_2|^2,  \  \ |n(x_1)-n(x_2)|\leq C|x_1-x_2| \quad \text{for all}\quad x_1,x_2\in S.
\end{equation}
We assume $h_0\leq \frac{1}{4Cc_1}.$ For $x_1,x_2\in S$ and for $|t_1|,|t_2|\leq c_1h_0,$ we have for $w=x_1+t_1n(x_1)-x_2-t_2n(x_2)$ utilizing (\ref{4.10}), that
\begin{align*}
|w||x_1-x_2| & \geq |w \cdot (x_1-x_2)| \\
& \geq |x_1-x_2|^2-c_1h_0|n(x_1)\cdot (x_1-x_2)|-c_1h_0|n(x_2)\cdot (x_1-x_2) |\\
&\geq \frac{1}{2}|x_1-x_2|^2,
\end{align*}
thus $|w| \geq \frac{1}{2}|x_1-x_2|.$ We have on the other hand 
\begin{align*}
|w| &\geq |w\cdot n(x_1)| \\
& \geq |t_1-t_2|-|t_2||(n(x_1)-n(x_2))\cdot n(x_1)|-|n(x_1)\cdot (x_1-x_2)|  \\
&\geq |t_1-t_2|-(1+Cc_1h_0)|x_1-x_2|\\
&\geq |t_1-t_2|-\frac{5}{4}|x_1-x_2|.
\end{align*}
Combining both estimates we get after some algebra $|w| \geq \frac{2}{11}\left(|x_1-x_2|+|t_1-t_2|\right).$
The upper bound $|w|\leq C\left(|x_1-x_2|+|t_1-t_2|\right)$ is straightforward. We now introduce the reference domain
$$
\widehat{\Omega}_{h_0}^h=\Psi \{(x,t) \ : \  \ x\in S, \  -h_0a_h(x)<t<h_0b_h(x)\}.
$$
We will briefly justify next that the domains $\widehat{\Omega}_{h_0}^h$ form a uniformly Lipschitz family. 
The point is that $\widehat\GO_{h_0}^h$ has thickness of order $h_0$ and that $h$ enters only through the
normalized thickness functions $a_h,b_h$, which have bounded Lipschitz constants uniformly in $h$ by (\ref{2.3}). 
This is necessary for the derivation of part (ii) of Theorem~\ref{Thm:2.4} from part (i).

\begin{lemma}[Uniformly Lipschitz reference domains]
\label{Lem:4.2}
Assume \textup{(H)} and \textup{(\ref{2.3})}, and let $h_0=h_0(S,c_1)$ be as above. After possibly decreasing
$h_0$ by a factor depending only on $c_2$ and $L_S$, there exist constants
$\eps_0,L_0,R_0>0$, depending only on $S,c_1,c_2$ and $L_S$, such that
for every $h\in(0,h_0)$ the domain $\widehat\GO_{h_0}$ is $(L_0,R_0)$-Lipschitz in the sense
of Definition~\ref{Def:2.1}, with the parameter $\eps_0$.
\end{lemma}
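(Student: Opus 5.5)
The plan is to verify the two conditions of Definition~\ref{Def:2.1} directly for $\widehat\GO^h_{h_0}$, using the bi-Lipschitz parametrization $\Psi$ established above. The diameter condition is immediate: $\diam\widehat\GO^h_{h_0}\le \diam S+2c_1h_0$, so once $\eps_0$ is fixed (depending only on $S,c_1,c_2,L_S$) we may take $R_0=2(\diam S+1)/\eps_0$. All the work is in the local graph condition. We split boundary points $p\in\partial\widehat\GO^h_{h_0}$ into three types:
\begin{itemize}
\item[(a)] points on the top or bottom face, $p=\Psi(x,h_0b_h(x))$ or $p=\Psi(x,-h_0a_h(x))$, with $\dist(x,\partial S)\ge r_S/4$;
\item[(b)] points on the lateral face, $p=\Psi(x,t)$ with $x\in\partial S$;
\item[(c)] face points close to $\partial S$, including the edges.
\end{itemize}
The goal is to choose $\eps_0=\kappa h_0$, with $\kappa$ small, and to find in each case a direction $e$ and an $L_0$-Lipschitz graph describing the domain in $B_{\eps_0}(p)$.

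\textbf{Case (a).} Work in the frame $(T_xS,n(x))$ and use the chart $\Gf_x$ from (H2) on $B'_{r_S/2}$, which exists by (H3). The top face is the image of $y'\mapsto \Psi(\hat y,h_0b_h(\hat y))$, where $\hat y=(y',\Gf_x(y'))$. The normal map is $C^1$ by (H1), and $|\nabla b_h|\le c_2$, $|\nabla\Gf_x|\le\frac12$. Hence, after shrinking $h_0$ so that $c_1h_0\|\nabla n\|_\infty\le\frac14$, this map has tangential derivative close to the identity, up to $O(\tfrac12+c_2h_0+h_0)$. So the face is a graph over $T_xS$ in the $n(x)$ direction with Lipschitz constant controlled by $1+c_2$. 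The opposite face lies at distance at least $h_0$ (because $a_h,b_h\ge1$). Therefore, for $\eps_0<h_0/2$, the ball $B_{\eps_0}(p)$ meets only this face, and $\widehat\GO\cap B_{\eps_0}(p)$ is the subgraph region. This is where the shrinking of $h_0$ ``by a factor depending on $c_2$'' is needed.

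\textbf{Case (b).} Use (H3) at $x\in\partial S$. In the coordinates $(y_1,y_2,t)$ given by $\Psi$ composed with the chart, the domain is
\[
\{y_2<\psi_x(y_1),\ -h_0a_h<t<h_0b_h\}.
\]
This is a Lipschitz ``wedge'' in the flat coordinates, and $\Psi$ distorts it by a bi-Lipschitz map with constants near $1$.

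\textbf{Case (c): the main obstacle.} Near edges $\partial S\times\{h_0b_h\}$ the boundary is an intersection of two graphs, so it is not immediately a single graph. The plan is to pick the direction $e=(-\nu,\tau)/\sqrt2$ in the flat coordinates, where $\nu$ is the in-plane direction $e_2$ of the $\psi_x$ chart and $\tau$ is the normal direction $e_3$. This direction points strictly into the exterior of both faces: the cone condition holds because $\mathrm{Lip}(\psi_x)\le L_S$ and $|\nabla(h_0b_h)|\le c_2h_0\le1$. A domain that is an intersection of two subgraphs in directions having a common transversal cone is a single Lipschitz graph in any direction of that cone, with constant depending only on the cone aperture, hence only on $L_S$ and $c_2$. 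This gives $L_0=L_0(L_S,c_2)$ after composing with $\Psi$, whose differential is $O(h_0)$-close to an isometry in the frame.

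Finally, the ball size. For points in (c) with $\dist(x,\partial S)<r_S/4$, we take $\eps_0\le \min(h_0/4,r_S/8)$, so that a ball of radius $\eps_0$ around such a point sees at most the lateral face and one thin face. The exclusion of the opposite thin face again uses the thickness lower bound $h_0$. Checking that these radius bookkeepings all close with constants independent of $h$ is routine once the cone direction is fixed. The delicate point is the uniform transversality in (c), which is exactly where $h_0$ must be reduced.
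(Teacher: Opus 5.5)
Your overall route is the paper's. You flatten through the charts of (H) and note that in the parameter coordinates $(y_1,y_2,t)$ the domain is cut out by Lipschitz graphs: top and bottom faces of slope $O(c_2h_0)$, and a lateral cylinder over $\mathrm{graph}\,\psi_x$. You treat edges with a tilted common direction and transfer through $\Psi$, whose differential is locally close to a fixed linear map. Your explicit choice $\eps_0\simeq h_0$, so that no ball sees both thin faces, makes precise what the paper leaves to ``compactness''.

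The key step in case (c), however, is wrong as written. Since the domain lies on the side $y_2<\psi_x(y_1)$, $\nu=e_2$ is the \emph{outward} in-plane direction. Along $e=(-\nu+\tau)/\sqrt2$ the function $y_2-\psi_x(y_1)$ decreases at rate $1/\sqrt2$, so you enter through the lateral face. Meanwhile $t-h_0b_h$ increases at rate at least $(1-C(S)c_2h_0)/\sqrt2$, so you exit through the top face. Lines in direction $e$ passing near the edge therefore meet $\widehat\GO^h_{h_0}\cap B_{\eps_0}(p)$ in a bounded interval with two boundary crossings. So the domain is not a subgraph in direction $e$, and the claim that $e$ points strictly into the exterior of both faces is false. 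Take instead $e=(\nu+\tau)/\sqrt2$ at top edges and $e=(\nu-\tau)/\sqrt2$ at bottom edges, or directions pointing into the interior of both faces such as the paper's $-e_2-\kappa e_3$. Then both constraint functions are monotone in the same sense along $e$, the domain along each line is $\{\lambda<\min(\lambda_1,\lambda_2)\}$, and the minimum of the two Lipschitz crossing functions is the required graph.

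Some smaller points:
\begin{itemize}
\item Your $45^\circ$ tilt works for every $L_S$ only because the lateral constraint is independent of $t$: every $d$ with $d_2>L_S|d_1|$ is admissible for it, whatever $d_3$. You then need the strict bound $C(S)c_2h_0<1$ (better $\le\frac12$), not $c_2h_0\le1$.
\item An $L_S$-dependent smallness of $h_0$ still enters when you push forward by $\Psi$, because $D\Psi-I$ must be small compared with the aperture of the combined cone. The paper instead tilts by $\kappa\simeq(1+L_S)^{-1}$ and puts the $L_S$-dependence into the requirement that the face slope be below $\kappa$.
\item (H3) gives $B'_{r_S/2}\subset\mathcal O_x$ only when $\dist(x,\partial S)\ge r_S$. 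For $\dist(x,\partial S)\ge r_S/4$ you get only a smaller disk from $|\nabla\phi_x|\le\frac12$, which is enough since you only use balls of radius $\eps_0\lesssim h_0$.
\item $D\Psi$ is $O(h_0)$-close to an isometry only in the frame at a base point within $O(h_0)$ of the foot of $p$. So handle near-edge face points in a chart centred at a point of $\partial S$ within $O(\eps_0)$. Alternatively, use the paper's observation that $D\Psi$ is close to a fixed invertible linear map, which also preserves Lipschitz graphs quantitatively.
\end{itemize}
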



\begin{proof}

Recall that
$$
1\leq a_h,b_h\leq c_1,\qquad |\nabla a_h|+|\nabla b_h|\leq c_2.
$$
Fix one of the finitely many coordinate charts supplied by hypothesis (H). In that chart, write the surface as
$$
x=\Gth(y),\qquad y\in O\subset\mathbb R^2,
$$
where $O$ is either a disk or, near $\partial S$ it is a uniformly Lipschitz subgraph. Consider first the parameter domain
$$
G_h=\left\{(y,s) \ : \  y\in O,  \ -h_0a_h(\Gth(y))<s <h_0b_h(\Gth(y)) \right\}.
$$
The upper and lower boundary functions have uniformly bounded Lipschitz constants, since
$$
\mathrm{Lip}\bigl(h_0a_h\circ\Gth\bigr)+\mathrm{Lip}\bigl(h_0b_h\circ\Gth\bigr) \leq C(S)h_0c_2.
$$
Near the lateral boundary, the base $O$ has Lipschitz constant bounded by $L_S$. Hence $G_h$ is a Lipschitz domain with constants independent of $h$. The same conclusion holds at the intersections of the lateral and upper or lower boundaries, because the face slope $C(S)c_2h_0$ is small relative to the transversality threshold, so that a tilted direction, e.g., 
 $-\kappa e_3-e_2$ where $\kappa\simeq \frac{1}{1+L_S}$ sees both constraints as graphs. Now define
$$
\Theta(y,s)=\Gth(y)+s n(\Gth(y)).
$$
By the previously proved tubular-neighborhood estimate, after decreasing $h_0$ if necessary, $\Theta$ and $\Theta^{-1}$ are bi-Lipschitz on
$$
\{(y,s) \ : \ y\in O,\ |s|\leq c_1h_0\},
$$
with constants depending only on $S$ and $c_1$. Also, ince the normal vector field $n(x)$ is $C^1$, the transform $\Theta$ is locally and thus globally $C^1$ too. We can expand $\Theta$ at a fixed point $y_0$ as:
$$\Theta(y)=\Theta(y_0)+D \Theta(y_0)\circ \GL(y,y_0),$$
where $\GL(y,y_0)$ is close to the translated identity map when $y$ is close to $y_0.$ Close to the identity maps and fixed invertible linear maps (as $D\Theta(y_0)$ here) preserve Lipschitz graphs quantitatively, thus $\Theta(y)$ will do so too. Consecuently, since 
$ \Theta(G_h)= \widehat{\GO}_{h_0}^h $ within the corresponding surface patch, the uniform Lipschitz character of $G_h$ transfers through $\Theta$ to $\widehat{\Omega}_{h_0}^h$. Finally, the atlas of $S$ is finite and
$$
\operatorname{diam}\widehat{\Omega}_{h_0}^h
\leq
\operatorname{diam}S+2c_1h_0.
$$
Consequently, there exist constants $L_0,R_0>0$, depending only on $S,c_1,c_2$ and the fixed choice of $h_0$, 
such that every $\widehat{\Omega}_{h_0}^h$ is an $(L_0,R_0)$-Lipschitz domain with a parameter $\eps_0$ by compactness. The lemma is now proven.

\end{proof}

Define next
$$
\Phi_h\colon \widehat{\Omega}_{h_0}\longrightarrow\Omega_h\quad\text{by}\quad \Phi_h(x+sn(x))=x+\frac{sh}{h_0}n(x).
$$
For $h\in (0,h_0),$ the map $\Phi_h$ is bi-Lipschitz, with constants uniform in $h$ as proven above.
 Furthermore, its Jacobian satisfies
\begin{equation}
\label{4.11}
c h \leq |\det D\Phi_h(x,s)| \leq C h
\end{equation}
for all $x\in S$ and  $-h_0a_h(x)<s<h_0b_h(x).$ Set
$$
\widehat{\Gamma}_{h_0}=\left\{ x+sn(x) \ : \ x\in\partial S,\; -h_0a_h(x)\leq s\leq h_0b_h(x) \right\},
$$
and 
$$
\widehat d_{h_0}(x+tn(x))= \dist\bigl(x+tn(x),\widehat{\Gamma}_{h_0}\bigr)\quad\text{for}\quad x\in S,  \  -h_0a_h(x)<t<h_0b_h(x). 
$$
Uniformly in $h$ for small enough $h>0$ we have by the bi-Lipscitz property of $\Psi:$
\begin{equation}
\label{4.12}
c\,\widehat d_{h_0}(x+sn(x)) \leq \delta'\bigl(\Phi_h(x+sn(x))\bigr) \leq C\,\widehat d_{h_0}(x+sn(x))
\quad\text{for}\quad x\in S,  \  -h_0a_h(x)<t<h_0b_h(x). 
\end{equation}
Let $u\in W^{1,p}(\Omega_h;\mathbb R^3)$, and set
$$
v(x+sn(x))=u\bigl(\Phi_h(x+sn(x))\bigr)\quad\text{for}\quad x\in S,  \  -h_0a_h(x)<t<h_0b_h(x). 
$$
By part (i) of Theorem~\ref{Thm:2.4}, applied to the uniformly Lipschitz domain $\widehat{\Omega}_h$ and to the closed boundary subset $\widehat{\Gamma}_{h_0}$, there exists $a\in\mathbb R^3$ such that
\begin{equation}
\label{4.13}
\|\widehat d_{h_0}^\alpha(v-a)\|_{L^p(\widehat{\Omega}_{h_0})} 
\leq C \|\widehat d_{h_0}^{1+\alpha}Dv\|_{L^p(\widehat{\Omega}_{h_0})},
\end{equation}
uniformly for small $h$. Next we estimate $Dv$. For $y=x+sn(x)$ we have
$$
Dv(y) = \nabla u\bigl(\Phi_h(y)\bigr)D\Phi_h(y).
$$
Since $D\Phi_h$ is uniformly bounded for small $h$, we obtain
\begin{equation}
\label{4.14}
|Dv(y)| \leq C \left| \nabla u\bigl(\Phi_h(y)\bigr) \right|.
\end{equation}
Using (\ref{4.11}) and (\ref{4.12}), and the change of variables $z=\Phi_h(y)$, the left-hand side of (\ref{4.13}) satisfies
\begin{align*}
\|(\delta')^\alpha(u-a)\|_{L^p(\Omega_h)}^p 
&= \int_{\widehat{\Omega}_{h_0}} \delta'\bigl(\Phi_h(y)\bigr)^{\alpha p} |v(y)-a|^p |\det D\Phi_h(y)|dy \\
& \leq C h \int_{\widehat{\Omega}_{h_0}} \widehat d_{h_0}(y)^{\alpha p} |v(y)-a|^pdy\\
&= C h \|\widehat d_{h_0}^\alpha(v-a)\|_{L^p(\widehat{\Omega}_{h_0})}^pdy
\end{align*}
Similarly, by (\ref{4.11}) and (\ref{4.14}) we have that 
\begin{align*}
h \|\widehat d_{h_0}^{1+\alpha}Dv\|_{L^p(\widehat{\Omega}_{h_0})}^p
&\leq C h \int_{\widehat{\Omega}_{h_0}} \widehat d_{h_0}(y)^{(1+\alpha)p} \left| \nabla u\bigl(\Phi_h(y)\bigr) \right|^p dy \\
&\leq C \int_{\Omega_h} (\delta'(y))^{(1+\alpha)p} |\nabla u(z)|^p dz \\
&= C \|(\delta')^{1+\alpha}\nabla u\|_{L^p(\Omega_h)}^p.
\end{align*}
Combining these estimates with (\ref{4.13}), we conclude that
$$
\|(\delta')^\alpha(u-a)\|_{L^p(\Omega_h)}
\leq
C
\|(\delta')^{1+\alpha}\nabla u\|_{L^p(\Omega_h)}.
$$
This completes the proof of part (ii) of the theorem.
\end{proof}


\begin{remark}
\label{Rem:4.2} 
Even if $u=0$ on $\GG,$ one can't take $a=0$ as in the standard Poincaré inequality with vanishing boundary conditions. This happens because to control the function near the boundary we used information from the interior. 
To confirm this, we consider the following 1d counter-example for $\Omega =(0,1)$ and $\GG=\{0\}$. Take

$$u_n=\begin{cases} 
n & if  \quad \delta_\GG(x)\geq \frac{1}{n}\\
\delta_\GG(x)n^{2} &  if \quad   \delta(x)\leq \frac{1}{n} 
\end{cases}
\qquad\text{then}\qquad
 |u'_n| = 
\begin{cases} 
0 &  if  \quad  \delta_\GG(x)> \frac{1}{n}\\
n^{2} & if \quad  \delta_GG(x)\leq \frac{1}{n}
 \end{cases}
$$
So for $n>4$ we have that 
$$\int_0^1 \delta_\GG(x)^{\alpha p} u_n(x) ^p \geq \int_{\delta_\GG(x)\geq 1/4} \delta_\GG(x)^{\alpha p}n^p \geq \frac{1}{4^{\alpha p}}\frac{1}{2}n^p\to \infty$$
However $$\int_0^1 \delta_\GG(x)^{(\alpha+1) p} |u'_n(x)| ^p = \int_0^{\frac{1}{n}} x^{(\alpha+1)p} n^{2p} =\frac{2}{(\alpha+1)p+1}n^{2p-(\alpha+1)p-1} =C n^{p(1-\alpha )-1} $$
so we conclude that for any $p\geq 1$ and $\alpha\geq 0:$ 
$$\frac{\|\delta(x)^\alpha |u_n(x)|\|^p}{\|\delta(x)^{\alpha+1} |u'_n(x)|\|^p}\geq C \frac{n^p}{n^{p(1-\alpha)-1}}=n^{1+p\alpha}  \to \infty.$$

\end{remark}


\section{Proof of Theorem~\ref{Thm:2.2}}
\setcounter{equation}{0}
\label{Sec:5}

In this section, we will prove the weighted inequalities in Theorem~\ref{Thm:2.2}. The idea is similar to the proof of the weighted Poincaré inequality (Theorem~\ref{Thm:2.4}), but the covering part is a more delicate infinite Whitney type cover. This kind of estimates are typically proven for $\GG=\dOm$, but we will prove it for any $\GG\subset\dOm$ because it will be necessary to prove the inequality for thin domains. We divide the proof into several parts. First we prove a Whitney-type decomposition of $\GO$ adapted 
to $\Gamma$.


\begin{lemma}
\label{Lem:5.1}
Let $\GO\subset\mathbb R^{n}$ be an open connected $(L,R)$-Lipschitz set with parameter $\varepsilon>0$, and let
$\Gamma\subseteq\partial\GO$ be nonempty and closed. There exist constants $\lambda\ge1$, 
$M\in\mathbb N$, $L_0,R_0>0$, all depending only on $n,L,R$, a countable family of open sets $U_i\subset\GO$,
numbers $r_i>0$ and functions $\varphi_i\in C^{\infty}(\GO;[0,1])$ with the indexing set $J$ such that
\begin{enumerate}
\item $\GO=\bigcup_{i\in J}U_i$, each $U_i$ is connected, $(L_0,R_0)$-Lipschitz and
$\diam U_i\le\lambda r_i.$
\item $\lambda^{-1}r_i\le\Gd_\GG(x)\le\lambda r_i$ for all $x\in U_i.$
\item $\sum_i\chi_{U_i}\le M$ on $\GO$.
\item $\operatorname{supp}\varphi_i\cap\GO\subset U_i$, $\ \sum_i\varphi_i\equiv1$ on $\GO$,
$\ |\nabla\varphi_i|\le\lambda/r_i$.
\end{enumerate}
\end{lemma}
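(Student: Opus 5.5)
The plan is to build a Whitney-type decomposition adapted to $\Gamma$ out of two kinds of pieces: interior Whitney cubes, which are automatically $(L_0,R_0)$-Lipschitz, and boundary collars of the form $D(z,\sigma)$ from Claim 2 in the proof of Lemma~\ref{Lem:4.1}, centered at points of $\partial\Omega\setminus\Gamma$. The partition of unity is then produced from these pieces by the standard normalization argument.

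\textbf{Step 1: dyadic layers.} Set $\Omega_k=\{x\in\Omega:\ 2^{-k-1}\varepsilon<\delta_\Gamma(x)\le 2^{-k}\varepsilon\}$ for $k\ge k_0$, where $k_0=k_0(R)$ is chosen so that $2^{-k_0}\varepsilon>\diam\Omega$; this is possible because $\diam\Omega<R\varepsilon$. Each piece will be assigned to a layer and given $r_i\simeq 2^{-k}\varepsilon$. For each $k$, pick a maximal $c\,2^{-k}\varepsilon$-separated subset of $\Omega_k$, with $c=c(L)$ small.

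\textbf{Step 2: interior and boundary pieces.} Let $x$ be one of these points.
\begin{itemize}
\item[(a)] If $\delta_{\partial\Omega}(x)\ge c'2^{-k}\varepsilon$, take $U=B_{c''2^{-k}\varepsilon}(x)$ with $c''<c'/2$. A ball is $(0,R_0)$-Lipschitz with parameter comparable to its radius, so this is an interior piece.
\item[(b)] Otherwise, take $z\in\partial\Omega$ nearest to $x$. Then $\delta_\Gamma(z)\ge 2^{-k-2}\varepsilon$, provided $c'$ is small. Take $U=D(z,\sigma)$ with $\sigma=c'''2^{-k}\varepsilon\le \tfrac{2}{5}\varepsilon_L$.
\end{itemize}
By Claim 2, $D(z,\sigma)$ is a bi-Lipschitz image of a cylinder under a map with constants $1+L$. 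It is therefore connected and $(L_0,R_0)$-Lipschitz with $L_0,R_0$ depending only on $n,L$. The lateral and bottom faces of $D$ are transversal Lipschitz graphs, so at each boundary point one can tilt the graph direction as in Lemma~\ref{Lem:4.2}. Moreover $\diam D\simeq\sigma$ and $\dist(D,\Gamma)\ge \delta_\Gamma(z)-\diam D\gtrsim 2^{-k}\varepsilon$, which gives property 2. Using Claim 2(i), the cores $\Omega\cap B_{4\sigma}(z)$ and the half-size balls cover $\Omega_k$. Taking the union over $k$ gives property 1.

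\textbf{Step 3: finite overlap.} If $U_i\cap U_j\neq\emptyset$, property 2 forces $r_i\simeq r_j$, so the two pieces belong to layers $k,k'$ with $|k-k'|\le C(\lambda)$. Within one layer the centers are separated at scale $r_i$ and all pieces meeting a fixed point lie in a ball of radius $\lambda r_i$. A volume-counting argument with the disjoint separated balls of radius $\tfrac{c}{2}2^{-k}\varepsilon$ (they lie in $\Omega$, or one uses Claim 1 to find comparable balls inside $\Omega$) bounds their number by $C(n,L)$. This gives $M$.

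\textbf{Step 4: partition of unity.} Take cores $U_i^\circ$ (the half-scale sets) that still cover $\Omega$ and satisfy $\dist(U_i^\circ,\Omega\setminus U_i)\gtrsim r_i$ within $\Omega$. Mollify $\chi_{U_i^\circ}$, or take a smooth cutoff equal to $1$ on $U_i^\circ$ with support in the $r_i/4$-neighborhood, so that $|\nabla\psi_i|\le C/r_i$. Then set $\varphi_i=\psi_i/\sum_j\psi_j$. The denominator is at least $1$ and, by finite overlap, at most $M$. Since only boundedly many comparable $r_j$ contribute at each point, $|\nabla\varphi_i|\le C M/r_i$.

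\textbf{Main obstacle.} The delicate point is case (b) near $\Gamma$ itself. A point $x$ with small $\delta_\Gamma$ but even smaller $\delta_{\partial\Omega}$ requires the nearest point $z$ to stay well away from $\Gamma$, and the collar $D(z,\sigma)$ must not approach $\Gamma$. This is where $\delta_{\partial\Omega}(x)\ll\delta_\Gamma(x)$ is used, via the estimate $\delta_\Gamma(z)\ge\delta_\Gamma(x)-\delta_{\partial\Omega}(x)$. The other delicate point is checking, uniformly in scale, that the collars are $(L_0,R_0)$-Lipschitz with $\varepsilon$-parameter comparable to $\sigma$. I would handle this by rescaling $D(z,\sigma)$ to unit size, which reduces it to a fixed Lipschitz-graph cylinder with slope at most $L$.
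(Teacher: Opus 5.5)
Your argument is correct, but it is built on a different decomposition from the paper's.

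\textbf{The paper's route.} It starts from a Whitney decomposition of $\mathbb R^n\setminus\Gamma$ with an inflated separation constant ($k\ell(Q_i)\le\dist(Q_i,\Gamma)$, $k\ge 4\sqrt n\,m$). It subdivides cubes of side larger than $\rho=\varepsilon/2m$ and discards those with $\tfrac32Q\cap\Omega=\emptyset$. It sets $U_i=\operatorname{int}(2Q_i)$ when $2Q_i$ misses $\partial\Omega$, and otherwise a collar of radius $3\sqrt n\,r_i$ over some $x_i\in 2Q_i\cap\partial\Omega$. Each property then comes from the cube geometry:
\begin{itemize}
\item covering follows from $\tfrac32Q_i\cap\Omega\subset U_i$;
\item $\delta_\Gamma\simeq r_i$ follows from $U_i\subset mQ_i$;
\item finite overlap follows from the disjoint cube interiors;
\item the partition of unity comes from bumps equal to $1$ on $Q_i$ and supported in $\tfrac32Q_i$, with gradient control from the comparability of neighbouring Whitney cubes.
\end{itemize}

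\textbf{Your route.} You work intrinsically in $\Omega$: dyadic level sets of $\delta_\Gamma$, separated nets, and balls or collars $D(z,\sigma)$ at a nearest boundary point. You keep the collars away from $\Gamma$ through $\delta_\Gamma(z)\ge\delta_\Gamma(x)-\delta_{\partial\Omega}(x)$ rather than through the Whitney constant.

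\textbf{Comparison.} The cubes make covering, overlap and smooth cutoffs nearly automatic, at the price of the subdivision and discarding bookkeeping. Your route reuses Claim 2 directly, but it needs a chain of constants that you should write out:
\begin{itemize}
\item $c<c''/2$, $c''<c'/2$ and $c'\le\tfrac14$;
\item $c+c'\le 4c'''$, so that $B_{4\sigma}(z)$ contains the $c2^{-k}\varepsilon$-neighbourhood of $x$;
\item $10(1+L)c'''\le\tfrac18$, so that $D(z,\sigma)\subset B_{10(1+L)\sigma}(z)$ keeps $\delta_\Gamma\ge 2^{-k-3}\varepsilon$;
\item $c'''2^{-k_0}\varepsilon\le\tfrac25\varepsilon_L$, so that Claim 2 applies at every layer.
\end{itemize}
These are compatible, e.g.\ $c'=2c'''$, $c''=c'''/2$, $c=c'''/8$ with $c'''$ small. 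The last condition makes all of them depend on $R$ through $k_0$, not only on $L$; this is harmless, since $\lambda$ and $M$ may depend on $R$.

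\textbf{Minor points.}
\begin{itemize}
\item The core gap $\dist(U_i^\circ,\Omega\setminus U_i)\gtrsim r_i$, which you assert for collars, does hold. The computation in Claim 2(i) actually gives $\Omega\cap B_{5\sigma}(z)\subseteq D(z,\sigma)$, so $\Omega\cap B_{4\sigma}(z)$ is at distance $\ge\sigma$ from $\Omega\setminus D(z,\sigma)$.
\item A ball is not $(0,R_0)$-Lipschitz, since a sphere is not locally a hyperplane. It is, e.g., $(1,R_0)$-Lipschitz with parameter half its radius.
\item The volume count in Step 3 can be done in $\mathbb R^n$ directly, so the separated balls need not lie in $\Omega$.
\end{itemize}
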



\begin{proof}[Proof of Lemma~\ref{Lem:5.1}]
Let $m=m(n,L)\geq 2\sqrt n$ yet to be specified, and $k=k(n,L)\ge4\sqrt n m$, and set $\rho=\varepsilon/2m$.  Let $\{Q_i\}$ be a Whitney decomposition of the open set  $\mathbb R^{n}\setminus\GG$ into closed dyadic cubes with pairwise disjoint interiors such that
\begin{equation}
\label{5.1}
k\ell(Q_i) \le \dist(Q_i,\Gamma) \le 10k\sqrt n\ell(Q_i),
\end{equation}
where $\ell(Q)$ is the side length of $Q.$ This is possible by taking the standard decomposition of $\mathbb R^{n}\setminus\GG$ (see [\ref{bib:Stein}] for details), which satisfies $\diam Q\le\dist(Q,\Gamma)\le4\diam Q$, and subdividing each cube dyadically $q$ times with $2^q\ge k>2^{q-1}$. We subdivide every $Q_i$ with $\ell(Q_i)>\rho$ into congruent subcubes of side length in $(\rho/2,\rho]$, and keep only those cubes $Q$ of the resulting family with 
$\tfrac32Q\cap\GO\ne\emptyset.$ Set $r_i=\ell(Q_i).$ For every retained cube we have 
\begin{equation}
\label{5.2}
kr_i\le\dist(Q_i,\Gamma)\le K r_i,\quad \text{uniformly for some} \quad K=K(n,L,R).
\end{equation}
Indeed, the lower bound is obvious. For cubes that were not subdivided this is (\ref{5.1}). For a subdivided one we have 
 $r_i>\rho/2,$ and since $\tfrac32Q_i$ meets $\GO$ and $\Gamma\subset\partial\GO$, we have
 \begin{align*}
\dist(Q_i,\Gamma)&\le\diam\GO+\diam Q_i\\
& \leq R\eps+\sqrt nr_i\\
& \leq (4Rm+\sqrt n)r_i.
\end{align*}
We now construct the open sets $U_i.$ For any retained cube $Q_i$ consider two cases.\\
\textbf{Case (a). $2Q_i\cap\partial\GO=\emptyset$.} Since $\tfrac32Q_i$ meets $\GO$ and $2Q_i$ is
connected and disjoint from $\partial\GO$, we have $2Q_i\subset\GO$. Put $U_i=\operatorname{int}(2Q_i),$
which is $(L_0,R_0)$-Lipschitz with absolute constants, and $\tfrac32Q_i\cap\GO\subset U_i$.\\
\textbf{Case (b). $2Q_i\cap\partial\GO\ne\emptyset$.} Pick $x_i\in2Q_i\cap\partial\GO$ and use Definition 2.1 at $x_i.$ We have up to an isometry that $x_i=0$, $f(0)=0$ and
$$B_\eps(0)\cap\GO=B_\eps(0)\cap V,\quad V=\{y_n<f(y')\},\quad \mathrm{Lip}(f)\le L.$$ 
Set
\begin{equation}
\label{5.3}
U_i=\big\{(y',y_n):\ |y'|<3\sqrt nr_i,\quad f(y')-3\sqrt n(1+L)r_i<y_n<f(y')\big\}.
\end{equation}
Now we prove that following properties of $U_i$ in Case (b).
\begin{itemize}
\item[(P1)] $U_i\subset B_{\varepsilon}(x_i)\cap V=B_\varepsilon(x_i)\cap\GO$.
\item[(P2)] $U_i$ is $(L_0,R_0)$-Lipschitz with $L_0,R_0$ depending only on $n,L$, with $\diam U_i\le C(n,L)r_i.$
\item[(P3)] One has $\tfrac32Q_i\cap\GO\subset U_i.$ 
\end{itemize}

\begin{proof}[Proof of (P1)-(P3).]
(P1). We have that every $y\in U_i$ satisfies the bounds
$$
|y-x_i|\le C(n,L)r_i\le C(n,L)\rho\le\eps/2,
$$
as long as $m\ge2C(n,L).$ This fixes the number $m=m(n,L)$ and proves (P1).\\
(P2). The set $U_i$ is the region between two parallel $L$-Lipschitz graphs over a ball, hence connected and $(L_0,R_0)$-Lipschitz with $L_0,R_0$ depending only on $n,L$ (as it is shown in Claim 2 in the proof of Lemma~\ref{Lem:4.1}). The bound 
$\diam U_i\le C(n,L)r_i$ follows from the definition of $U_i$ and the Lipschitz property of $f$ with Lipschitz constant $L.$\\
(P3). Assume $y\in\tfrac32Q_i\cap\GO.$  We have
$$
|y|=|y-x_i|\le\diam(2Q_i)=2\sqrt nr_i,
$$
so $|y'|<3\sqrt nr_i$. Moreover we have 
$$
|y|\le 2\sqrt nr_i \leq 2\sqrt n\rho=\frac{\sqrt{ n} \varepsilon}{m}<\varepsilon,
$$
hence we have $y_n<f(y')$ because $y\in\GO\cap B_\Ge(0)=V\cap B_\Ge(0)$. We have furthermore 
$$
f(y')-y_n=\big(f(y')-f(0)\big)+\big(0-y_n\big)\le2\sqrt n(1+L)r_i.
$$
Hence we have $y\in U_i$ by the definition of $U_i.$

\end{proof}
  
Consequently, we have that in both cases $U_i\subset mQ_i$ (enlarging $m$ if necessary), so parts 1 and 2 of the lemma hold by (\ref{5.2}), with some $\lambda=\Gl(n,L,R)$, because  $\GO=\bigcup_i(\tfrac32Q_i\cap\GO)\subset\bigcup_iU_i\subset\GO$. 

We now prove part 3 of the lemma. Let $x\in\GO$ and let $i$ be such that $x\in U_i\subset mQ_i$. By (\ref{5.2}) 
we have that $r_i\simeq \Gd_\GG(x)$ with constants depending only on $n,L,R$, and $Q_i\subset B(x,C\Gd_\GG(x))$. Since the $Q_i$ have disjoint interiors and $|Q_i|=\ell_i^{n}\simeq \Gd_\GG(x)^{n}$, the number of such $i$ is at most $M=M(n,L,R)$. This proves part 3 of the lemma.

We now construct the partition of unity satisfying part 4 of the lemma.  Choose 
$\Gf_i\in C_c^{\infty}\left(\operatorname{int}\left(\tfrac32Q_i\right);[0,1]\right)$ with $\Gf_i=1$ on $Q_i$ and $|\nabla\Gf_i|\le C(n)/r_i$. 
Then set $\Gf=\sum_i\Gf_i\ge1$ onv$\mathbb R^{n}\setminus\Gamma\supset\GO$, $\Gf\le M$, and neighboring cubes have comparable side lengths, so $|\nabla\Gf |\le C(n,L,R)/r_i$ on $\tfrac32Q_i$. Put $\varphi_i=\Gf_i/\Gf$ on $\GO$ (cubes that
were discarded contribute $0$ on $\GO$). Then $\sum_i\varphi_i=1$ on $\GO$,
$\operatorname{supp}\varphi_i\cap\GO\subset\tfrac32Q_i\cap\GO\subset U_i$ and
$|\nabla\varphi_i|\le C/r_i$. \\
This completes the proof of the lemma.
\end{proof}


We now get back to the proof of Theorem~\ref{Thm:2.2}.

\begin{proof}[Proof of Theorem 2.2] 
We will prove only (\ref{2.2}). The estimate (\ref{2.1}) is obtained from (\ref{2.2}) through a well-known argument by testing (\ref{2.2}) with the field $x+\mu u(x)$ and sending $\mu$ to zero [\ref{bib:Fri.Jam.Mue.1},\ref{bib:Fri.Jam.Mue.2}]. In the proof, the constant $C$ may only depend on $n,p,\alpha, L,R.$  Set $E=\|\Gd_\GG^{\alpha}\dist(\nabla u,\SO(n))\|_{L^{p}(\GO)}$ and assume $E<\infty$, otherwise there is nothing to prove. The rest of the proof below follows the lines in the proof of the geometric rigidity estimate in [\ref{bib:Con.Zwi.}]. We present the details for the convenience of the reader. For any $i\in J,$ we have by the lemma that $\Gd_\GG \simeq  r_i$ on $U_i$, hence
\begin{equation}
\label{5.4}
\|\dist(\nabla u,\SO(n))\|_{L^{p}(U_i)}\le\lambda^{\alpha}r_i^{-\alpha}
\|\Gd_\GG^{\alpha}\dist(\nabla u,\SO(n))\|_{L^{p}(U_i)}<\infty.
\end{equation}
The standard geometric rigidity estimate applied on $U_i$ yields $\BR_i\in \SO(n)$ with
$$
\|\nabla u-\BR_i\|_{L^{p}(U_i)}\le C\|\dist(\nabla u,\SO(n))\|_{L^{p}(U_i)},
$$
with
$C=C(n,p,L,R)$ independent of $i$. Multiplying by $(\lambda r_i)^{\alpha}$ and using the lemma we get
\begin{equation}
\label{5.5}
\|\Gd_\GG^{\alpha}(\nabla u-\BR_i)\|_{L^{p}(U_i)}\ \le C\|\Gd_\GG^{\alpha}\dist(\nabla u,\SO(n))\|_{L^{p}(U_i)}.
\end{equation}
Summing the $p$-th powers and using the finite overlap property of the cover we obtain
\begin{equation}
\label{5.6}
\sum_i\|\Gd_\GG^{\alpha}(\nabla u-\BR_i)\|^{p}_{L^{p}(U_i)}\ \le\ CE^{p}.
\end{equation}
Following [\ref{bib:Con.Zwi.}], we let  
$$
\BF=\sum_i\varphi_i\BR_i\in C^{\infty}(\GO;\mathbb R^{n\times n}),
$$
where the sum is locally finite by the lemma. Since $\sum_i\varphi_i=1$, thanks to the finite overlap property and by 
$\operatorname{supp}\varphi_i\cap\GO\subset U_i,$ we have by Jensen's inequality pointwise a.e. in $\GO:$
\begin{align*}
|\nabla u-\BF|&=\Big|\sum_i\varphi_i(\nabla u-\BR_i)\Big|\\
& \le\sum_i\varphi_i|\nabla u-\BR_i| \\
& \le C\Big(\sum_i\varphi_i|\nabla u-\BR_i|^{p}\Big)^{1/p},
\end{align*}
hence, by (\ref{5.5})-(\ref{5.6}) we obtain
\begin{align}
\label{5.7}
\|\Gd_\GG^{\alpha}(\nabla u-\BF)\|^{p}_{L^{p}(\GO)} & \le\sum_i\|\Gd_\GG^{\alpha}(\nabla u-\BR_i)\|^{p}_{L^{p}(U_i)}\\ \nonumber
 & \le CE^{p}.
\end{align}
Now since $\sum_i\nabla\varphi_i=0$ in $\GO$, we have
$$
\nabla \BF=\sum_i\nabla\varphi_i\otimes \BR_i=\sum_i\nabla\varphi_i\otimes(\BR_i-\nabla u).
$$
By the lemma  $\Gd_\GG|\nabla\varphi_i|\le\lambda^{2}$ on $U_i$ and $\nabla\varphi_i=0$ on $U_i^c$, so,
again by Jensen we have
\begin{align*}
\Gd_\GG^{1+\alpha}|\nabla \BF| & \le\lambda^{2}\sum_i\chi_{U_i}\Gd_\GG^{\alpha}|\nabla u-\BR_i|\\
&\le C\Big(\sum_j\chi_{U_j}\Gd_\GG^{\alpha p}|\nabla u-\BR_j|^{p}\Big)^{1/p},
\end{align*}
and therefore from (\ref{5.7}) we get
\begin{align}
\label{5.8}
\|\Gd_\GG^{1+\alpha}\nabla \BF\|_{L^{p}(\GO)} 
& \le C\Big(\sum_j\|\Gd_\GG^{\alpha}(\nabla u-\BR_j)\|^{p}_{L^{p}(U_j)}\Big)^{1/p}\\ \nonumber
& \le CE.
\end{align}
Now Theorem 2.4 (i) yields a constant matrix $\BR_*\in\mathbb R^{n\times n}$ with
\begin{align}
\label{5.9}
\|\Gd_\GG^{\alpha}(\BF-\BR_*)\|_{L^{p}(\GO)} & \le C\|\Gd_\GG^{1+\alpha}\nabla \BF\|_{L^{p}(\GO)}\\ \nonumber
& \le CE.
\end{align}
Combining (\ref{5.7}) and (\ref{5.9}) we arrive at $\ \|\Gd_\GG^{\alpha}(\nabla u-\BR_*)\|_{L^{p}(\GO)}\le CE$.
In the last step we replace $\BR_*$ by a proper rotation through a routine procedure. 
Let $\BR\in \SO(n)$ be a nearest point to $\BR_*$ in $\SO(n)$ (which is compact). For a.e. $x\in\GO$ we have
$$
|\BR_*-\BR|=\dist(\BR_*,\SO(n))\le|\BR_*-\nabla u(x)|+\dist(\nabla u(x),\SO(n)).
$$
Multiplying by $\Gd_\GG(x)^{\alpha}$ and taking $L^{p}(\GO)$-norms we get
$$
\|\Gd_\GG^{\alpha}(\BR_*-\BR)\|_{L^{p}(\GO)}\le\|\Gd_\GG^{\alpha}(\nabla u-\BR_*)\|_{L^{p}(\GO)}+E\le CE
$$
and one more triangle inequality gives (\ref{2.2}). This completes the proof of the theorem. 

\end{proof}


\section{Proof of Theorem~\ref{Thm:2.3}}
\setcounter{equation}{0}
\label{sec:plateWKorn}

In this section, all constants $c$ and $C$ may depend only on $S,c_1,c_2,\alpha, p$ and $n$ unless otherwise specified. Recall from the proof of Theorem~\ref{Thm:2.4} (ii), that there is $h_0>0$, depending only on $S$ and $c_1$, such that for $h\in(0,h_0)$ the map
$$
\Psi(x,t):=x+t\,n(x),\qquad x\in S,\ |t|\le c_1h,
$$
is injective and bi-Lipschitz with absolute constants, $\GO_h=\Psi\left(\{(x,t):-g_1^{h}(x)<t<g_2^{h}(x)\}\right)$ is an open set with 
$|\GO_h|\simeq h,$ $\diam\GO_h\simeq\diam S$, and
\begin{equation}
\label{6.1}
c \dist(x,\partial S)\le\Gd'(\Psi(x,t))\le C \dist(x,\partial S)\quad\text{for all}\quad x\in S, \  t\in (-g_1^{h}(x),g_2^{h}(x)).  
\end{equation}

We start with a uniform $h$-scale decomposition of $\Omega_h$ lemma.


\begin{lemma}[Uniform decomposition of $\Omega_h$]
\label{Lem:6.1}
Assume \textup{(H)} and \textup{(\ref{2.3})}. There exist constants $C_0,\tilde h_0,L_0,R_0>0,$ $M_0\ge1,$ and $N_0\in\mathbb N$, all depending only on $S,c_1,c_2$, such that for every $h\in(0,\tilde h_0)$ there are finitely many open sets $V_i^h\subset\GO_h,$ with the indexing set 
$I=\{1,2,\dots,K\}$ and a subset $I_\partial$ of $I$, functions $\varphi_i\in C^{\infty}(\GO_h,[0,1])$ for $i=1,2,\dots,K$ and for indices $i\in I_\partial$, there exist nonempty closed sets $\Gamma_i\subset\partial V_i^h$ such that the following conditions hols:
\begin{enumerate}
\item $\GO_h=\bigcup_{i\in I}V_i^h$ and each $V_i^h$ is a connected $(L_0,R_0)$-Lipschitz set with $\diam V_i^h\le M_0h$.
\item The sets $V_i^h$ have the uniformly bounded overlap property: $\sum_{i=1}^K\chi_{V_i^h}\le N_0.$
\item $\operatorname{supp}\varphi_i\cap\GO_h\subset V_i^h$, $\sum_{i=1}^K\varphi_i=1$ on $\GO_h$, and
$|\nabla\varphi_i|\le M_0/h$.
\item For $i \notin I_\partial$ we have $\ \max_{V_i^h}\Gd'\le C_0\min_{V_i^h}\Gd'$, while for 
 $i\in I_\partial$ we have  $\dist(x,\Gamma_i)\simeq \Gd'(x)$ for all $x\in V_i^h$.
\end{enumerate}
\end{lemma}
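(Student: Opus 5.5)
Work in the normal coordinates $\Psi(x,t)=x+t\,n(x)$. Since $\Psi$ is bi-Lipschitz with constants depending only on $S,c_1$, it suffices to build the cover on the parameter side $\{(x,t): x\in S,\ -g_1^h(x)<t<g_2^h(x)\}$ and push it forward. On this side all properties (Lipschitz character, diameters, overlap, gradient bounds of cutoffs) are preserved up to constants. By (\ref{6.1}), $\Gd'(\Psi(x,t))\simeq \dist(x,\partial S)$, so the weight depends essentially only on the tangential variable. The cover is therefore the product of an $h$-scale cover of $S$ with the full fiber $(-g_1^h,g_2^h)$.

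Concretely, fix $\kappa=\kappa(S,c_1,c_2)$ large but fixed and take a maximal $h$-separated set $\{x_i\}\subset S$. Split the indices into $I_\partial=\{i:\dist(x_i,\partial S)\le \kappa h\}$ and its complement.

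For $i\notin I_\partial$, use (H2)--(H3) to write $S\cap B(x_i,r_S)$ as a graph over $T_{x_i}S$ with $B'_{r_S/2}\subset\mathcal O_{x_i}$. Set
$$V_i^h=\Psi\{(x,t): x\in S,\ |\pi_{x_i}(x-x_i)|<2h,\ -g_1^h(x)<t<g_2^h(x)\},$$
where $\pi_{x_i}$ is the projection to $T_{x_i}S$. In the graph coordinates this is a set between two Lipschitz graphs over a disk of radius $2h$, with thickness between $2h$ and $2c_1h$. After rescaling by $1/h$ the graph slopes are at most $|\nabla\Gf|\le\frac12$ and $|\nabla g_j^h|/1\le c_2h$, so it is $(L_0,R_0)$-Lipschitz exactly as in Claim~2 of Lemma~\ref{Lem:4.1} and Lemma~\ref{Lem:4.2}, with $\diam V_i^h\le M_0h$. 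Since $\dist(x,\partial S)\ge \kappa h-3h$ on the base, (\ref{6.1}) gives $\max\Gd'\le C_0\min\Gd'$ once $\kappa$ is large enough.

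For $i\in I_\partial$, pick $z_i\in\partial S$ with $|z_i-x_i|\le\kappa h$. Use (H3) at $z_i$: the base $\mathcal O_{z_i}\cap B'_{(\kappa+3)h}$ is a Lipschitz subgraph $\{y_2<\psi_{z_i}(y_1)\}$. Take $V_i^h$ to be the $\Psi$-image of the portion of the shell over this base, and set $\Gamma_i=\partial V_i^h\cap\partial_S\Omega_h$, the lateral piece, which is nonempty and closed. Its Lipschitz character at the corners where the lateral face meets the top and bottom faces is handled by the tilted-direction argument from Lemma~\ref{Lem:4.2}. Here the face slopes are $O(c_2h)$ relative to the scale $h$, which is harmless against $L_S$, so the constants are uniform.

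For the equivalence $\dist(x,\Gamma_i)\simeq\Gd'(x)$, the inequality $\Gd'\le\dist(\cdot,\Gamma_i)$ is trivial. Conversely, for $x\in V_i^h$ we have $\Gd'(x)\lesssim \dist(\Psi^{-1}x,\partial S)\lesssim\kappa h$. The nearest lateral point of $\partial_S\Omega_h$ therefore lies within $C\kappa h$ of $z_i$, and we compare it, via the Lipschitz graph $\psi_{z_i}$, with a point of $\partial S\cap B(z_i,(\kappa+3)h)$ at comparable distance. This is the step I expect to be the main obstacle: one must make sure the base ball is large enough, relative to $\kappa$, that the nearest point of $\partial S$ from $x$ lies within the patch. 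I would try to resolve it by choosing the base radius $A\kappa h$ with $A=A(L_S)$, while still covering $B(x_i,2h)$.

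Bounded overlap follows from the $h$-separation and the fact that all bases have radius $\le C\kappa h$. The number of $x_j$ in a ball of radius $C h$ on the bi-Lipschitz surface $S$ is bounded, which gives $N_0$.

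The partition of unity is built from cutoffs $\tilde\varphi_i(x)=\chi\bigl(|\pi_{x_i}(x-x_i)|/h\bigr)$ on $S$, extended constantly in $t$. Here $\chi=1$ on $[0,1]$ and $\chi=0$ on $[\frac32,\infty)$, and $|\nabla\tilde\varphi_i|\le C/h$. Maximality of the $h$-net gives $\sum\tilde\varphi_i\ge1$, and bounded overlap gives $|\nabla\sum\tilde\varphi_i|\le C/h$. Normalizing, $\varphi_i=\tilde\varphi_i/\sum_j\tilde\varphi_j$ composed with $\Psi^{-1}$ satisfies item 3, with $|\nabla\varphi_i|\le M_0/h$ by the bi-Lipschitz bound on $\Psi$. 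Finally, choose $\tilde h_0\le h_0$ small so that $(\kappa+3)Ah\le r_S/2$, which makes all the graph representations valid.
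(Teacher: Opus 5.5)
Your construction is essentially the paper's: a maximal $h$-net on $S$, and a split into interior indices and boundary indices recentred at $z_i\in\partial S$. Patches are defined through the tangential projection and pushed forward by $\Psi$, with uniform Lipschitz character obtained by rescaling by $h^{-1}$. $\Gamma_i$ is the lateral piece of $\partial V_i^h$, and the partition of unity comes from normalized cutoffs. For the interior patches this works as you describe, with two small corrections. First, (H3) gives $B'_{r_S/2}\subset\mathcal O_{x_i}$ only when $\dist(x_i,\partial S)\ge r_S$. For your interior indices, argue instead that $B'_{2h}\subset\mathcal O_{x_i}$, because a point of $\partial\mathcal O_{x_i}\cap B'_{2h}$ would give a point of $\partial S$ within $3h<\kappa h$ of $x_i$. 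Second, $\Psi^{-1}$ is only $C^1$, so your cutoffs are not $C^\infty$ as stated. This is fixed by ambient cutoffs $\chi(|x-x_i|/h)$ together with patches of radius $C(c_1)h$.

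The step you flagged is a genuine gap, and enlarging the radius does not close it. Take a round base $\mathcal O_{z_i}\cap B'_R$ with $\Gamma_i$ the lateral face over $\partial S\cap\bar B'_R$. Then $\dist(x,\Gamma_i)\lesssim\delta'(x)$ can fail at the rim. If $L_S\ge1$, an $L_S$-Lipschitz graph through $0$ may leave $\bar B'_R$ and then run just outside the circle $|y|=R$ along an arc: any arc inside the cone $|y_2|\le L_S|y_1|$ on which the circle has slope at most $L_S$ in absolute value will do. Base points just inside that arc have $\dist(\cdot,\partial S)$ arbitrarily small, while their distance to $\partial S\cap\bar B'_R$ stays of order $R$. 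Since a Lipschitz $\partial S$ can display this at every scale, in general no constant uniform in $h$ and $i$ is available.

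Taking $R=A\kappa h$ only moves the rim. The offending points sit at distance $\simeq R$ from $z_i$ whatever $R$ is, so their nearest point of $\partial S$ can always lie outside the patch. The same near-tangential meetings of $\operatorname{graph}\psi_{z_i}$ with the circle can also produce cusps, so the uniform Lipschitz character of round boundary patches is not automatic either.

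What works, and what the vertical-projection argument behind (\ref{6.3}) really needs, is a collar-shaped patch as in Claim~2 of the proof of Lemma~\ref{Lem:4.1}. In the chart at $z_i$ use the base $\{|y_1|<R,\ \psi_{z_i}(y_1)-cR<y_2<\psi_{z_i}(y_1)\}$ with $R\ge C(\kappa+3)h$ and $c\ge1+L_S$, so that $S\cap B(x_i,2h)$ is still covered. Let $\Gamma_i$ be the lateral piece over the curve $\{(s,\psi_{z_i}(s)):|s|\le R\}$.

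For every base point $y$, the point $(y_1,\psi_{z_i}(y_1))$ lies on that curve. Hence the distance from $y$ to the curve is at most $v:=\psi_{z_i}(y_1)-y_2\le\sqrt{1+L_S^2}\,\dist(y,\operatorname{graph}\psi_{z_i})$. Moreover $\dist(y,\partial S)\le v<cR\ll r_S$ guarantees that the nearest point of $\partial S$ lies on this chart graph. The walls $|y_1|=R$ meet both $\operatorname{graph}\psi_{z_i}$ and its translate at angles at least $\arctan(1/L_S)$, which restores the uniform Lipschitz character. Lifting to $\mathbb R^3$ through $\Psi$ costs only constants: at the projected point, clamp $t$ into $[-g_1^h,g_2^h]$, which moves it by $O(c_2hv)$.
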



\begin{proof}[Proof of Lemma~\ref{Lem:6.1}]
Fix $M$ large, depending on $S,c_1,L_S$ (to be chosen later) and 
$\tilde h_0\le \min \left(\frac{h_0}{2},\frac{r_S}{2M(1+L_S)},\frac{r_S}{10M'}\right)$, where $h_0$ is the constant in the proof of part (ii) of Theorem~\ref{Thm:2.4}. We divide the proof into several steps.\\ 
\noindent\textbf{Step 1 (Construction).}
Let $\{x_i\}_{i\in I}\subset S$ be a maximal $h$-separated set. Due to the hypothesis (H), we have $K=|I|\simeq h^{-2}$. For $x\in S$
let $\pi_x$ be the projection operator onto the tangent space $T_xS.$ For $x\in S$ and $r\le r_S/10$ define further
\begin{align*}
N(x,r)&=\{z\in S:\ |\pi_x(z-x)|<r\},\\
V(x,r)&=\Psi\big(\{(z,t):z\in N(x,r),\ -g_1^{h}(z)<t<g_2^{h}(z)\}\big).
\end{align*}
Set initially for $h\in(0,\tilde h_0):$
$$
V_i^h=V(x_i,Mh)\quad\text{ and}\quad I_\partial=\Big\{i\in I \ :\ \inf_{V_i^h}\Gd'<M h\Big\}.
$$
For $i\in I_\partial$, the bounds in (\ref{6.1}) yield a point $x_0\in N(x_i,Mh)$ with $\dist(x_0,\partial S)\le CM h$,
hence a point $z_i\in\partial S$ with $|x_i-z_i|\le C'Mh$. We re-centre that patch by
replacing $V_i^h$ with $V(z_i,M' h)$, where $M'=(C'+2)M$, which contains the old $V_i^h$.
Thus we obtain finitely many patches of diameter $\le CM h$, centered at
$x_i\in S$ when $i\notin I_\partial$ or at $z_i\in\partial S$ when $i\in I_\partial$. The accompanying partition 
of unity in part 3 is obtained as in the last step of the proof of Lemma~\ref{Lem:5.1}. \\
\noindent\textbf{Step 2 (Proof of parts 1-4).} Now we prove that the constructed elements have the properties stated in the lemma. Let the point $w\in\mathbb R^3$ belong to $N$ sets $V_i^h.$ Then the ball $B_{Ch}(w)$ contains all the centers $x_i$ of that sets $V_i^h$ for some $C>0.$ Hence $B_{(C+1)h}(w)$ will contain all the corresponding balls $B_{h/2}(x_i),$ and since the balls $B_{h/2}(x_i),$ are disjoint, we get $|B_{(C+1)h}(w) |\geq \sum_i |B_{h/2}(x_i)|,$ which yields the bound $N\leq (2C+2)^3.$ This is part 2 of the lemma with 
$N_0=(2C+2)^3.$ The bound $\diam V_i^h\le M_0h$ and the inclusion $\GO_h=\bigcup_{i\in I}V_i^h$ with some
$M_0=CM$ and $N_0=N_0(S,M)$ in part 1 obviously follow from the construction and from the fact that the doubled 
balls $B_{2h}(x_i)$ cover $S$. Let's now verify the uniform Lipschitz property of $V_i^h.$ Rescale $V_i^h$ by $h^{-1}$ about its centre. In the chart of (H), surface $S$ becomes the graph of $y\mapsto h^{-1}\phi(hy)$, whose gradient is $\nabla\phi(hy)\to0$ uniformly as $h\to0$ by uniform continuity of $\nabla\phi$ by compactness. The rescaled thickness functions
$\hat a(y)=\frac{1}{h}g_1^{h}(x_i+hy)$ and $\hat b(y)=\frac{1}{h}g_2^{h}(x_i+hy)$ satisfy by (\ref{2.3}) the bounds
$$
1\le \hat a, \hat b\le c_1,\quad\text{and}\quad |\nabla \hat a|+|\nabla \hat b|=|\nabla g_1^{h}|+|\nabla g_2^{h}|\le c_2h \le c_2\tilde h_0,
$$
and for $i\in I_\partial$, the base domain is $\{|y|<M,\ y_2<h^{-1}\psi_{z_i}(hy_1)\}$ with the same Lipschitz constant $L_S$. Hence the rescaled sets are, for $h\le \tilde h_0$, uniformly $(L_0,R_0)$-Lipschitz and connected with for some $L_0$ and $R_0$ depending only on $S,c_1,c_2,$ and $M.$ The same is true for $V_i^h,$ because the $(R,L)$-Lipschitz property is scale-invariant. This completes parts 1 and 2 of the lemma and we turn to the next parts.

If $i\notin I_\partial$ then $\Gd'\ge Mh$ on $V_i^h$, while $\diam V_i^h\le M_0h$, hence 
$$
\max_{V_i^h}\Gd'\le\min_{V_i^h}\Gd'+M_0h\le \left(1+\frac{M_0}{M}\right)\min_{V_i^h}\Gd'=C_0\min_{V_i^h}\Gd',
$$
with $C_0=1+\frac{M_0}{M}.$ This is part 4 for interior patches. Let now $i\in I_\partial$, with centre $z_i\in\partial S$, and set
\begin{equation}
\label{6.2}
\Gamma_i=\overline{\Psi\big(\{(w,t)\ : \ w\in\partial S\cap N(z_i,M h),\ -g_1^{h}(x)\le t\le g_2^{h}(x)\}\big)}
\subset\partial V_i^h.
\end{equation}
Obviously $\Gamma_i\ne\emptyset$ and since $\Gamma_i\subset\partial_S\GO^h$, we have $\Gd' \le\dist(\cdot,\Gamma_i)$ on $V_i^h$. For the opposite bound, work in the chart of (H) at $z_i$ and use (\ref{6.1}). It suffices to show for $y=(y_1,y_2)$ in the base
domain with $|y|<M h$ and $y_2<\psi_{z_i}(y_1)$ that
\begin{equation}
\label{6.3}
\dist\big(y,\ \operatorname{graph}\psi_{z_i}|_{\{|s|<M h\}}\big)\ \le\ \sqrt{1+L_S^{2}}\ \dist(y,\partial S).
\end{equation}
Indeed, the vertical distance $v=\psi_{z_i}(y_1)-y_2$ bounds the left-hand side from above, because
the vertical projection $(y_1,\psi_{z_i}(y_1))$ belongs to the graph over $\{|s|<Mh\}$ as $|y_1|<Mh.$
Also, $v\le\sqrt{1+L_S^{2}}\,\dist(y,\operatorname{graph}\psi_{z_i})$ because $\operatorname{graph}\psi_{z_i}$ is $L_S$-Lipschitz. Finally $\dist(y,\operatorname{graph}\psi)=\dist(y,\partial S)$ in the chart by (H) and $\tilde h_0\le r_S/(2M)$,
all points of $\partial S$ at distance $\le r_S$ from $z_i$ lie in the chart, and
$$
\dist(y,\partial S)\le v\le(1+L_S)Mh\leq (1+L_S)M\tilde h_0\leq \frac{r_S}{2}.
$$
This proves (\ref{6.3}) and thus part 4 of the lemma for boundary patches.
\end{proof}


We prove (\ref{2.4}) and (\ref{2.5}) in the same way as (\ref{2.1}) and (\ref{2.2}), where we will employ part (ii) of Theorem~\ref{Thm:2.4} instead. 

\begin{proof}[Proof of Theorem~\ref{Thm:2.3}]

\textbf{The Ansatz-free bound.} Set $E=\|(\Gd')^{\alpha}\dist(\nabla u,\SO(3))\|_{L^{p}(\GO_h)}<\infty.$ The proof here follows exactly the lines in the proof of Theorem~\ref{Thm:2.2} with the only differences, that now due to the bound $|\nabla\varphi_i|\le M_0/h,$ 
an application of (\ref{2.6}) on each domain $v_i^h$ and the boundary portion $\GG_i,$ we will get the cllective estimate
\begin{equation*}
\|(\Gd')^{1+\alpha}\nabla \BF\|_{L^{p}(\GO^h)}\le\frac Ch
\Big(\sum_i\|(\Gd')^{\alpha}(\nabla u-\BR_i)\|^{p}_{L^{p}(V_i^h)}\Big)^{1/p}\le\frac ChE
\end{equation*}
instead (this is the only place where a factor $h^{-1}$ appears), and we will also need to apply the weighted Poincar\'e inequality in thin domains instead, i.e., part (ii) of Theroem~\ref{Thm:2.4} to the average field $\BF.$\\
\textbf{The Ansatz.} Assume that the midsurface $S$ contains a nonempty relatively open flat region. Thus,
after an isometry, there exists a bounded open set
$D\subset\mathbb{R}^2$ positive distance apart from the boundary of $S$ such that $D\times\{0\}\subset S.$ Choose a nonzero function $w\in C_c^\infty(D).$ For all sufficiently small $h>0$, the support of $w$ is separated by fixed positive distance from the lateral boundary of $\Omega_h.$ Consequently, for $w$ the distance function $\Gd'$ can be omitted from inequalities (\ref{2.4}) and (\ref{2.5}) and they both reduce to the corresponding non-weighted inequality. Both inequalities are well-understood for flat shells (plates) and it is known that a Kirchhoff--Love displacement 
$$
u^h(x_1,x_2,x_3)=
\begin{pmatrix}
-x_3\,\partial_1 w(x_1,x_2)\\[2mm]
-x_3\,\partial_2 w(x_1,x_2)\\[2mm]
w(x_1,x_2)
\end{pmatrix}
$$
in the local frame ($D$ is contained in the $x_1x_2-$plane) yields the desired scaling of Korn's constant in (\ref{2.4}), see [\ref{bib:Fri.Jam.Mue.1},\ref{bib:Fri.Jam.Mue.2}] or [Theorem~3.3, \ref{bib:Gra.Har.2}] for more details. By a well-known linearization procedure [\ref{bib:Fri.Jam.Mue.1},\ref{bib:Fri.Jam.Mue.2}], the Ansatz $x+h^2 u^h(x)$ will then work for (\ref{2.5}).

\end{proof}


\section*{Acknowledgements}
This material is supported by the National Science Foundation under Grants No. DMS-2206239.


\end{document}